\def\NN{{{\mathbb N}}}
\def\RR{{\mathbb R}}
\newtheorem{ackn}{Acknowledgments\!\!}
\newcommand{\varrg}{(M, g)}
\newcommand{\erre}{\mathds{R}}
\newcommand{\cinf}{C^{\infty}(M)}
\newcommand{\ricc}{\operatorname{Ric}}
\newcommand{\diver}{\operatorname{div}}
\newcommand{\set}[1]{{\left\{#1\right\}}}               
\newcommand{\pa}[1]{{\left(#1\right)}}                  
\newcommand{\sq}[1]{{\left[#1\right]}}                  
\newcommand{\abs}[1]{{\left|#1\right|}}                 
\renewcommand{\tilde}[1]{\widetilde{#1}}
\newcommand{\w}[2]{\mathrm{w}_{#1, #2}}
\newcommand{\W}[2]{\mathcal{W}_{#1, #2}}
\newtheorem{theorem}{\textbf{Theorem}}[section]
\newtheorem{lemma}[theorem]{\textbf{Lemma}}
\newtheorem{proposition}[theorem]{\textbf{Proposition}}
\newtheorem{cor}[theorem]{\textbf{Corollary}}
\newtheorem{defi}[theorem]{\textbf{Definition}}
\theoremstyle{remark}
\newtheorem{rem}[theorem]{\textbf{Remark}}
\numberwithin{equation}{section}
\title[Weyl scalars on compact Ricci solitons]
{Weyl scalars on compact Ricci solitons}
\date{\today}
\keywords{Ricci solitons, triviality results, Weyl tensor, Weyl scalars}
\subjclass[2010]{53C20, 53C25.}
\begin{document}

\maketitle

\date{\today}

\begin{center}
\textsc{\textmd{G. Catino\footnote{Politecnico di Milano, Italy.
Email: giovanni.catino@polimi.it.}, P.
Mastrolia\footnote{Universit\`{a} degli Studi di Milano, Italy.
Email: paolo.mastrolia@unimi.it.}}}
\end{center}

\begin{abstract}

We investigate the triviality of compact Ricci solitons under general scalar conditions involving the Weyl tensor. More precisely, we show that a compact Ricci soliton is Einstein if a generic linear combination of divergences of the Weyl tensor contracted with suitable covariant derivatives of the potential function vanishes. In particular we recover and improve all known related results. This paper can be thought as a first, preliminary step in a general program which aims at showing that Ricci solitons can be classified  finding a ``generic''  $[k, s]$-vanishing condition on the Weyl tensor, for every $k, s\in\mathds{N}$, where $k$ is the order of the covariant derivatives of Weyl  and $s$ is the type of the (covariant) tensor involved. 
\end{abstract}

\renewcommand{\abstractname}{Abstract}


%
%
%


\section{Introduction}

In recent years, the Weyl tensor has played a preeminent role in the classification of Riemannian manifolds  with ``special'' structures, such as Einstein metrics, Ricci solitons, and, more in general, Einstein-type manifolds (see for instance \cite{cmmrET} and references therein). This is quite natural since these special structures are assigned prescribing conditions on the trace part of the Riemann tensor (that is, on the Ricci tensor), and thus we can expect classification results (in dimension greater or equal than four) only assuming some further conditions on the traceless part (i. e., the Weyl tensor).

We  recall that if $(M^n, g)$ is a $n$-dimensional, connected,
Riemannian manifold with metric $g$, a \emph{soliton structure}  is the choice (if any) of a smooth
vector field $X$ on $M$ and a constant $\lambda\in\erre$ such that
\begin{equation}\label{1}
\ricc+\frac{1}{2}\mathcal{L}_Xg =\lambda g,
\end{equation}
where $\ricc$ denotes the Ricci tensor of the metric
$g$ and $\mathcal{L}_Xg$ is the
Lie derivative of the metric in the direction of $X$; the constant
$\lambda$ is called the soliton constant. The soliton is
 expanding, steady or shrinking if, respectively, $\lambda<0$,
$\lambda=0$ or $\lambda>0$.
If $X$ is the gradient of a potential
$f\in\cinf$ the soliton is called a \emph{gradient Ricci soliton} and
\eqref{1} becomes
\begin{equation*}
\ricc+\nabla^2 f=\lambda g,
\end{equation*}
where $\nabla^2 f$ denotes the Hessian of $f$. In this paper we will focus on compact Ricci solitons. In this case, by the work of Perelman \cite{per1}, we known that the soliton has to be gradient. Note that, when the potential function $f$ is a constant, a gradient Ricci soliton is an Einstein manifold. Ricci solitons generate self-similar solutions of the Ricci flow, play a fundamental role in the formation of singularities of the flow and have been studied by several authors (see H.-D. Cao~\cite{cao1, cao2} for nice overviews).
It has been shown by Perelman \cite{per1} that every compact steady and expading Ricci soliton is Einstein; moreover, in dimension three, Ivey \cite{ivey1} proved that the only compact shrinking Ricci solitons are, up to quotients, isometric to $\mathds{S}^3$ with the standard metric. Dimension four is then the lowest dimension allowing  ``nontrivial'' examples of compact shrinking Ricci solitons (see e.g. the survey \cite{cao1}and references therein).

The classification results already known in the literature often rely on {\em vanishing conditions} involving zero, first or specific second order derivatives of the Weyl tensor (see for instance \cite{emilanman, zhang, riolopez, munses, caoche2});  on the other hand, in the paper \cite{cmmVD} the authors obtain classification of (shrinking) gradient Ricci solitons only requiring a fourth order \emph{scalar} vanishing condition, namely
\begin{eqnarray*}
\operatorname{div}^4(W) :=W_{ikjl,iljk} = 0.
\end{eqnarray*}

This paper can be thought as a first, preliminary step in a general research program which aims at showing that gradient Ricci solitons can be classified  finding a ``generic'' (in a suitable sense) $[k, s]$-vanishing condition on the Weyl tensor, for every $k, s\in\mathds{N}$, where $k$ is the order of the covariant derivatives of Weyl  and $s$ is the type of the (covariant) tensor involved. For instance, to quote some important examples, the results in \cite{emilanman, zhang}, \cite{riolopez, munses}, \cite{caoche2} and \cite{cmmVD} deal, respectively, with $[0, 4]$, $[1, 3]$, $[2, 2]$ and $[4, 0]$ conditions.
Obviously, the study of the case  $[k, 0]$ is harder, since it involves only a scalar condition.

The aim of this work is to study \emph{general $[4, 0]$ conditions}, defined as linear combinations of divergences of the Weyl tensor, contracted with suitable covariant derivatives of the potential function $f$. With this choice we improve all the previous quoted results,  introducing a \emph{general Weyl scalar} having the same homogeneity under rescaling of the metric (see Section \ref{sec3} for the precise definitions).
For instance, we prove the following triviality result under a single Weyl scalar vanishing assumption.
\begin{proposition}\label{PR_1}
  For $n\geq 4$ there are no non-Einstein compact Ricci soliton, provided that at least one of the following Weyl scalars vanishes: $W_{ijkl}R_{ik}R_{jl}$, $W_{ijkl, i}R_{jl, k}$, $B_{ij}f_if_j$, $W_{ijkl,ilk}f_j$ or $W_{ijkl, ilkj}$.
\end{proposition}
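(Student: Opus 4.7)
The plan is to reduce each of the five conditions to the statement $W\equiv 0$; at that point the rigidity of compact locally conformally flat gradient Ricci solitons identifies $(M,g)$ with a finite quotient of $\SS^n$ with its standard metric, hence Einstein. Since $(M,g)$ is compact, by Perelman's theorem the soliton is gradient, so we may write $\ricc+\hess f=\lambda g$ and freely use Hamilton's identities $R+\Delta f=n\lambda$, $\nabla R=2\ricc(\nabla f,\cdot)$, together with the conserved scalar $R+|\nabla f|^2-2\lambda f=\mathrm{const}$.

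The organizing observation is that the five listed scalars are all $[k,s]$-conditions with $k+s=4$, where $k$ counts covariant derivatives of $W$ and $s$ counts derivatives of $f$ (equivalently, factors of $\ricc$ reinterpreted via the soliton equation). I would first establish a common master identity for the general linear combination of such scalars introduced in Section~\ref{sec3}: integrating over $M$ and using (i) the second Bianchi contraction $W_{ijkl,l}=\tfrac{n-3}{n-2}\,C_{ijk}$, (ii) the commutation formulas for covariant derivatives (whose Riemann-tensor error terms are decomposed into their Weyl and Schouten parts), and (iii) the soliton equation to convert $\hess f$ into $\lambda g-\ricc$, one expresses any such scalar modulo exact divergences as a positive multiple of $\int_M|W|^2$ plus a sum of squares of Cotton-type tensors. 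For each of the five specific choices this is implemented as follows: for $W_{ijkl,ilkj}$, integrate by parts four times to remove every covariant derivative from $W$; for $W_{ijkl,ilk}f_j$, integrate by parts three times, producing an extra Hessian of $f$; for $W_{ijkl,i}R_{jl,k}$ and $W_{ijkl}R_{ik}R_{jl}$, first substitute $R_{ij}=\lambda g_{ij}-f_{ij}$ and then integrate by parts; and for the Bach contraction $B_{ij}f^if^j$, use the definition of $B$ as a suitable double divergence of $W$ to reduce to the previous cases.

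The main obstacle is the combinatorial bookkeeping: each commutation of covariant derivatives yields Riemann tensor terms, and under the decomposition $\mathrm{Rm}=W+(\text{Schouten contribution})$ these produce further $W$-quadratic contributions whose signs must be tracked so that the resulting integrand is manifestly non-negative. The Bach case $B_{ij}f^if^j$ is the most delicate, since a double divergence of $W$ is paired with two gradients of $f$, and two successive integrations by parts are required before the Cotton tensor appears with the expected sign. The cleanest strategy is to carry out the computation once for the general $[4,0]$-scalar (presumably the content of Section~\ref{sec3}) and then specialize to each of the five listed scalars; every specialization yields a positive multiple of $\int_M|W|^2$, so the vanishing of any one of them forces $W\equiv 0$, which concludes the proof via the classification of compact conformally flat gradient Ricci solitons.
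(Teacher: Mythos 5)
Your overall scheme (a master integral identity for the general $[4,0]$ scalar, then specialization to the five listed cases) matches the architecture of the paper, but the identity you propose to prove is false, and the endgame you build on it cannot work. The five scalars are \emph{linear} in $W$ and its divergences, contracted against $\ricc$, $\nabla\ricc$ or $\nabla f\otimes\nabla f$; no amount of integration by parts and commutation of derivatives turns such an expression into ``a positive multiple of $\int_M|W|^2$ plus squares''. A concrete counterexample: on a compact Einstein manifold that is not conformally flat (e.g.\ $\mathbb{CP}^2$ with the Fubini--Study metric, viewed as a trivial gradient shrinking soliton with $f$ constant) all five scalars vanish identically --- $W_{ijkl}R_{ik}R_{jl}$ becomes a total trace of $W$, and $\diver W$ vanishes because the Schouten tensor is parallel --- yet $\int_M|W|^2>0$. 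So your master identity would prove too much. Consequently the final step, ``$W\equiv 0$ plus the classification of compact locally conformally flat solitons'', is not available: the correct output of the integral argument is $C\equiv 0$ or $D\equiv 0$ (the Cotton tensor and the Cao--Chen tensor), after which one invokes the rigidity of compact gradient solitons with vanishing Cotton tensor (respectively with $D\equiv 0$), not conformal flatness.

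What the paper actually does, and what your sketch misses, is this: (i) each scalar is rewritten pointwise as a linear combination of $|C|^2$, $|D|^2$ and $C_{ijk}D_{ijk}$ plus an exact divergence, using $C_{ijk}=\frac{n-2}{n-3}W_{tikj,t}$ and the first integrability condition $C_{ijk}+f_tW_{tijk}=D_{ijk}$ --- so the quadratic objects that appear are divergences of $W$ and contractions $f_tW_{tijk}$, never $W$ itself; (ii) the integration is performed against the parametric weight $e^{-\omega f}$, which is essential because for several of these scalars the unweighted integral carries no information (e.g.\ $\int_M W_{ijkl,ilkj}=0$ by Stokes for any metric), and only the $\omega$-dependence of the coefficients makes the quadratic form $\alpha|C|^2+2\beta\,CD+\gamma|D|^2$ definite, or suitably degenerate, for some choice of $\omega$. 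Your proposal contains neither the weight nor the $C$/$D$ decomposition, and these are precisely the two ideas that make the proof go through.
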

In particular, in the compact setting, the proposition improves the results in \cite{emilanman, zhang, riolopez, munses, caoche2}. More in general, Proposition \ref{PR_1} holds true assuming a generic $[4, 0]$ vanishing condition on Weyl (see Propositions \ref{MainPropDeg} and \ref{MainProp}); in particular, we show that the Bach tensor
\[
B_{ij} =  \frac{1}{n-3}  W_{ikjl, lk} + \frac{1}{n-2}W_{ikjl}R_{kl},
\]
although interesting for its connections with conformal geometry and Physics, plays no special role in the classification of Ricci solitons. In fact, we prove the following

\begin{proposition}\label{PR_BachGen}
  Let $(M^n, g)$ be a compact Ricci solitons of dimension $n\geq 4$. If 
  \[
  \pa{c_1W_{ikjl, lk}+c_2W_{tikj, t}f_k + \frac{1}{n-2}W_{ikjl}R_{kl}}R_{ij}=0 \quad \text{ on }\,M, 
  \]
  for some constants $c_1, c_2\in\mathds{R}$ with $c_{1}\neq \frac{1}{n-3}$ and $c_2>-\frac{1}{n-2}$,   then $(M, g)$ is Einstein.
\end{proposition}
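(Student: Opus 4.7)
My plan is to reduce the pointwise hypothesis to a weighted integral identity on the compact manifold and then extract a sign. By the Perelman result recalled in the introduction, a compact steady or expanding Ricci soliton is automatically Einstein, so I may assume $\lambda>0$ and work on a compact shrinking gradient soliton. Denote the tensor in parentheses by
\[
T_{ij} \;:=\; c_{1}\,W_{ikjl,lk}\;+\;c_{2}\,W_{tikj,t}f_{k}\;+\;\frac{1}{n-2}\,W_{ikjl}R_{kl}.
\]
Each summand is trace-free in $(i,j)$ because $W$ is totally trace-free, so substituting the soliton equation $R_{ij}=\lambda g_{ij}-f_{ij}$ converts the hypothesis $T_{ij}R_{ij}=0$ into $T_{ij}f_{ij}=0$ as well, a form which is well suited to integration by parts.

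The strategy is to integrate the hypothesis against the weighted volume $e^{-f}\,dV$ and apply weighted integration by parts, combined with the standard identities available on a gradient soliton: $f_{ijk}=-R_{ij,k}$ obtained by differentiating the soliton equation; the commutator $R_{ij,k}-R_{ik,j}=R_{ijkl}f^{l}$ deduced from the Ricci identity applied to $\nabla f$; the contracted second Bianchi identity $W_{ikjl,l}=\frac{n-3}{n-2}\,C_{jki}$ expressing the divergence of $W$ in terms of the Cotton tensor; and the resulting decomposition, on a soliton, of $C_{ijk}$ into a part proportional to $W_{ikjl}f^{l}$ plus terms involving $R_{ij}$ and $\nabla R$. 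Integrating by parts twice in the $W_{ikjl,lk}R_{ij}$ term and once in the $W_{tikj,t}f_{k}R_{ij}$ term, the weight $e^{-f}$ produces the expected extra factors of $\nabla f$, which upon invoking the above identities recombine into honest squares.

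After the reorganization, I expect an identity of the form
\[
0 \;=\; \int_{M} T_{ij}R_{ij}\,e^{-f}\,dV \;=\; \Bigl[\alpha\Bigl(\tfrac{1}{n-3}-c_{1}\Bigr)+\beta\Bigl(c_{2}+\tfrac{1}{n-2}\Bigr)\Bigr]\int_{M}|W_{ikjl}f^{l}|^{2}\,e^{-f}\,dV,
\]
with strictly positive coefficients $\alpha,\beta$ depending only on $n$. Under the assumptions $c_{1}\neq\tfrac{1}{n-3}$ and $c_{2}>-\tfrac{1}{n-2}$ the scalar prefactor is nonzero and of a definite sign, forcing $W_{ikjl}f^{l}\equiv 0$ on $M$. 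On a compact shrinking gradient Ricci soliton this vanishing is known to imply that the Weyl tensor vanishes identically (first on the open set $\{\nabla f\neq 0\}$, and then globally by analyticity and unique continuation), whence $(M,g)$ is locally conformally flat; the classification of compact locally conformally flat shrinking solitons then yields that $(M,g)$ is Einstein.

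The main obstacle is the careful bookkeeping of the cross terms generated by the two integrations by parts: several couplings involving products of $R_{ij}$, $W_{ikjl}$, and $\nabla R$ appear and must be reorganized using the contracted Bianchi identity and the commutator $R_{ij,k}-R_{ik,j}=R_{ijkl}f^{l}$ before they consolidate into a single $|W_{ikjl}f^{l}|^{2}$ term. Verifying that the coefficient is precisely the affine combination $\alpha\bigl(\tfrac{1}{n-3}-c_{1}\bigr)+\beta\bigl(c_{2}+\tfrac{1}{n-2}\bigr)$ — which isolates exactly the two degeneracy values excluded in the statement — is the delicate computational step, and it is what makes transparent the special role (or rather the absence of a special role) of the Bach coefficient $c_{1}=\tfrac{1}{n-3}$.
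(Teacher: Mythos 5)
Your overall strategy (weighted integration by parts against $e^{-\omega f}$ on a compact shrinking soliton, using $f_{ijk}=-R_{ij,k}$ and the divergence of Weyl) is the right one and is what the paper does, but the proposal has three concrete gaps. First, the claimed final identity is structurally wrong: after integrating by parts, the weighted integral of the hypothesis does \emph{not} collapse to a multiple of $\int_M |W_{ikjl}f_l|^2 e^{-f}$. It becomes a genuine quadratic form $\int_M\pa{\alpha|C|^2+2\beta\,CD+\gamma|D|^2}e^{-\omega f}$ in the Cotton tensor $C$ and the Cao--Chen tensor $D$ (here $f_tW_{tijk}=D_{ijk}-C_{ijk}$, so a pure $|W_{ikjl}f_l|^2$ term would require $\alpha=\gamma=-\beta$, which fails for generic $c_1,c_2$). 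Moreover a single weight $\omega=1$ is not enough: one must keep $\omega$ as a free parameter and check that the discriminant $\Delta(\omega)=\alpha\gamma-\beta^2$, a quadratic polynomial in $\omega$, is positive for \emph{some} $\omega$; for this proposition the relevant quantity is $\delta_1^2-\delta_0\delta_2=\frac{(n-3)[(n-2)c_2+1][(n-3)c_1-1]^2}{64(n-2)^2}$, whose positivity is exactly where the hypotheses $c_1\neq\frac{1}{n-3}$ and $c_2>-\frac{1}{n-2}$ enter (note the squared factor in $c_1$). Your guessed affine prefactor $\alpha(\tfrac{1}{n-3}-c_1)+\beta(c_2+\tfrac{1}{n-2})$ cannot be correct even formally: with $\alpha,\beta>0$, a sum of one term that is merely nonzero and one that is positive can still vanish (take $c_1>\tfrac{1}{n-3}$ suitably), so your stated hypotheses would not force your prefactor to be nonzero.

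Second, the endgame fails. Even if you had $W_{ikjl}f^l\equiv 0$, this does not imply $W\equiv 0$ on $\{\nabla f\neq 0\}$: it only says $\nabla f$ annihilates one slot of $W$, equivalently $C\equiv D$ by the first integrability condition $C_{ijk}+f_tW_{tijk}=D_{ijk}$, which by itself gives nothing. The correct conclusion of the positive-definiteness argument is $C\equiv D\equiv 0$, and then one invokes the fact that a compact gradient Ricci soliton with vanishing Cotton tensor is Einstein (Fern\'andez-L\'opez--Garc\'{\i}a-R\'{\i}o, Munteanu--Sesum); no detour through local conformal flatness is needed or available.
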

Note that the Bach case corresponds to the choice $c_1=\frac{1}{n-3}$ and $c_2=0$. Moreover, we can show two triviality results under mixed Weyl scalars vanishing assumptions. In general dimension $n\geq 4$ we can prove the following
\begin{proposition}\label{PR_mix1}
Let $(M^n, g)$ be a compact Ricci solitons of dimension $n\geq 4$. If 
\begin{align*}
c_1W_{tijk,tkji} &+ c_2W_{tijk,tkj}f_{i} + c_3W_{tijk,tk}f_{i}f_{j} + \frac{1}{n-3}W_{tijk,tk}R_{ij}+c_4W_{tijk,t}R_{ik,j}\\&+c_5W_{tijk,t}R_{ik}f_{j}+c_6 W_{tijk}R_{ik,jt}+c_7W_{tijk}R_{ik}f_{t}f_{j} +\frac{1}{n-2} W_{tijk}R_{tj}R_{ik} =0 \quad \text{ on }\,M,
\end{align*}
for some $c_i\in\RR$, $i=1,\ldots,7$, with either $c_1>0$ or $c_1=0$ and $c_4+\frac{n-2}{n-3}c_6\neq 0$,   then $(M, g)$ is Einstein.
\end{proposition}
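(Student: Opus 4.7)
The plan is to test the pointwise hypothesis against the weighted measure $e^{-f}\,dV$ and then integrate by parts repeatedly, until the whole expression collapses to a single, manifestly signed weighted $L^{2}$-quantity. By Perelman's theorem we may assume that the soliton is gradient, so $R_{ij}+f_{ij}=\lambda g_{ij}$. The workhorse is the weighted divergence identity
\[
\int_{M}(\operatorname{div} T)\,e^{-f}\,dV=\int_{M}\langle T,\nabla f\rangle\,e^{-f}\,dV,
\]
which shifts each derivative off the Weyl tensor at the cost of producing either an $f$-derivative (hitting the weight) or a curvature term through the Ricci identity; combined with $f_{ij}=\lambda g_{ij}-R_{ij}$ and the algebraic identity $W_{tijk}g^{ij}=0$, any Hessian of $f$ that is traced against two indices of $W$ produces a Ricci insertion into $W$.

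Starting from the top-order term $c_{1}W_{tijk,tkji}$, a first integration by parts on the final $i$-derivative produces $-c_{1}W_{tijk,tkj}f_{i}$ (yielding the $c_{2}$-term), a second produces $W_{tijk,tk}f_{ij}$ and $W_{tijk,tk}f_{i}f_{j}$ (yielding the $c_{3}$-term and, via $f_{ij}=\lambda g_{ij}-R_{ij}$, the fixed $\tfrac{1}{n-3}W_{tijk,tk}R_{ij}$ term). Continuing to integrate by parts off $W_{tijk,tk}$ and $W_{tijk,t}$, and using the Bianchi identity $W_{tijk,t}=\tfrac{n-3}{n-2}C_{kji}$ to rewrite divergences of the Weyl tensor as the Cotton tensor, generates the remaining $c_{4},c_{5},c_{6},c_{7}$ terms; the last Hessian produced, being traced against $W$, converts through the soliton equation into the fixed $\tfrac{1}{n-2}W_{tijk}R_{tj}R_{ik}$ term. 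The rigid coefficients $\tfrac{1}{n-3}$ and $\tfrac{1}{n-2}$ in the statement are therefore not a normalization choice but the unavoidable output of the two forced traces $W_{tijk,tk}f_{ij}\mapsto-W_{tijk,tk}R_{ij}$ and $W_{tijk}R_{\,tj}f_{ik}\mapsto-W_{tijk}R_{tj}R_{ik}$.

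After all the integrations by parts are completed, the hypothesis should take the form
\[
0=\int_{M}\Bigl(c_{1}|T|^{2}+\text{Ricci-identity corrections}\Bigr)e^{-f}\,dV,
\]
where $T$ is a tensor built from the Cotton tensor contracted with $\nabla f$ (equivalently, a weighted Bach-type quantity), and the correction terms, produced by commutators of covariant derivatives of $f$ and treated via the Hamilton-type identity $\Delta_{f}R_{ij}=2\lambda R_{ij}-2R_{iklj}R_{kl}$, recombine into $|T|^{2}$ or vanish. The assumption $c_{1}>0$ forces $T\equiv 0$ pointwise, whence the Bach-type vanishing of Proposition \ref{PR_BachGen} applies and $(M,g)$ is Einstein. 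In the degenerate case $c_{1}=0$ the principal part drops one order, the corresponding positive quantity is a cubic $L^{2}$-norm whose leading coefficient is exactly $c_{4}+\tfrac{n-2}{n-3}c_{6}$, and the hypothesis $c_{4}+\tfrac{n-2}{n-3}c_{6}\neq 0$ is precisely what keeps this coefficient non-zero.

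The main obstacle is the bookkeeping: each integration by parts produces several lower-order terms simultaneously (from the weight $e^{-f}$, from tracing Hessians into $W$, and from commutators $f_{ijk}-f_{ikj}=R_{lijk}f_{l}$), and one must verify that every single such term either matches one of the eight terms in the hypothesis with a free coefficient $c_{i}$, or is absorbed into the final $|T|^{2}$. It is this combinatorial matching that forces the proportionality $\tfrac{1}{n-3}:\tfrac{1}{n-2}$ between the two fixed-coefficient terms and selects exactly this eight-parameter family among all possible $[4,0]$ Weyl scalars.
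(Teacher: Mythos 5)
Your overall strategy (multiply by an exponential weight, integrate by parts, reduce to a signed quadratic quantity in Cotton-type tensors) is in the right family, but it misses the one idea that actually makes the paper's proof work: the weight must be the \emph{one-parameter family} $e^{-\omega f}$, $\omega\in\RR$, not the fixed weight $e^{-f}$. After all integrations by parts the hypothesis does \emph{not} collapse to $\int_M c_1|T|^2e^{-f}$ plus absorbable corrections; it becomes a full quadratic form
\[
0=\int_M\bigl(\alpha|C|^2+2\beta\,CD+\gamma|D|^2\bigr)e^{-\omega f},
\]
in the two independent tensors $C$ and $D=C+f_tW_{tijk}$, with $\alpha,\beta,\gamma$ depending on all the $c_i$ \emph{and on} $\omega$. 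For a fixed $\omega$ (in particular $\omega=1$) this form is in general indefinite and nothing follows. The paper's mechanism is that the pointwise hypothesis gives the identity for \emph{every} $\omega$, and the discriminant $\Delta=\alpha\gamma-\beta^2$ is a quadratic polynomial $\delta_2\omega^2+2\delta_1\omega+\delta_0$ in $\omega$; one then checks $\delta_2=\tfrac{n-3}{4(n-2)}c_1$, so $c_1>0$ lets you take $\omega$ \emph{sufficiently large} to get $\Delta>0$, whence the form is definite and $C\equiv D\equiv0$. Your coefficient bookkeeping is also off: $c_1$ multiplies $W_{tijk,tkji}$, whose weighted integral is $\tfrac{n-3}{2(n-2)}\omega\int|C|^2e^{-\omega f}$ — it contributes to $\alpha$ only through an $\omega$-dependent term, so with a fixed weight the coefficient of the putative square is not controlled by $c_1$ at all, and the sign hypothesis $c_1>0$ cannot be exploited. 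Likewise the degenerate case needs $\delta_1=\tfrac{n-3}{4(n-2)}\bigl(c_4+\tfrac{n-2}{n-3}c_6\bigr)\neq0$ together with the freedom to send $\omega\to\pm\infty$ with $\omega\delta_1>0$; a "cubic $L^2$-norm with leading coefficient $c_4+\tfrac{n-2}{n-3}c_6$" is not what appears, and with $\omega$ frozen you cannot choose its sign.

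Two smaller points. First, invoking Proposition \ref{PR_BachGen} at the end is the wrong closure: that proposition is itself a consequence of the same machinery; the correct final step is $D\equiv0\Rightarrow C\equiv0$ and then $C\equiv0$ on a compact soliton implies Einstein (the results of Cao--Chen and Fern\'andez-L\'opez et al.\ cited in the paper). Second, your narrative that the eight terms of the hypothesis are ``generated'' by integrating the top term by parts inverts the logic: the hypothesis is an arbitrary linear combination with free coefficients, and the paper's Lemma \ref{lem-32} and Corollaries \ref{cor-deprels}--\ref{corwg} convert \emph{each} term separately into its $(|C|^2,CD,|D|^2)$ content before the definiteness analysis. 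As written, your argument would not establish the proposition.
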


In dimension four, we have
\begin{proposition}\label{PR_mix2}
Let $(M^4, g)$ be a compact Ricci solitons of dimension four. If 
\begin{align*}
c_1W_{tijk,tkji} + &c_2W_{tijk,tkj}f_{i} + c_3W_{tijk,tk}f_{i}f_{j} + c_4W_{tijk,tk}R_{ij}\\&+c_5W_{tijk,t}R_{ik,j}-c_3W_{tijk,t}R_{ik}f_{j}+c_6W_{tijk}R_{ik,jt}+\frac{1}{2} W_{tijk}R_{tj}R_{ik} =0 \quad \text{ on }\,M,
\end{align*}
for some $c_i\in\RR$, $i=1,\ldots,6$,  with $1+c_2+c_4+c_5+c_6\neq 0$,   then $(M, g)$ is Einstein.
\end{proposition}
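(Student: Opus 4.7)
The plan is to reduce the claim to the general result Proposition \ref{MainProp} specialized to dimension four, by integrating the pointwise vanishing hypothesis over the compact manifold $M$ and then systematically applying integration by parts together with the soliton equation. The heuristic is that the free parameters $c_{2},\ldots,c_{6}$ act as moduli for choosing among integrands that are equivalent modulo total divergences, so only a single overall scalar multiple $1+c_{2}+c_{4}+c_{5}+c_{6}$ of the ``correct'' $n=4$ integrand should survive.

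First, the leading term $c_{1}W_{tijk,tkji}$ is a total divergence and integrates to zero. Using $\nabla^{2}f=\lambda g-\operatorname{Ric}$, the two-fold tracelessness of $W$ (so that $W_{tijk}g^{ij}=0=W_{tijk}g^{ik}$), and the antisymmetry $W_{tijk}=-W_{tikj}$ (which forces $W_{tijk}R_{jk}=0$), repeated integration by parts yields, modulo commutator remainders $\mathcal{R}_{i}$ coming from the Ricci identity applied to $W$,
\begin{align*}
\int_{M}W_{tijk,tk}f_{i}f_{j}\,dV &= \int_{M}W_{tijk,t}R_{ik}f_{j}\,dV + \mathcal{R}_{1},\\
\int_{M}W_{tijk,tkj}f_{i}\,dV &= \int_{M}W_{tijk,tk}R_{ij}\,dV + \mathcal{R}_{2}.
\end{align*}
In particular the two $\pm c_{3}$ terms in the hypothesis cancel at leading order, explaining the tying of their coefficients in the statement. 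Using also the identity $W_{tijk,tj}R_{ik}=-W_{tijk,tk}R_{ij}$ (obtained by swapping $j\leftrightarrow k$ and applying the antisymmetry of $W$), the remaining first-derivative terms $W_{tijk,t}R_{ik,j}$ and $W_{tijk}R_{ik,jt}$ also reduce, after integration by parts, to scalar multiples of $\int_{M}W_{tijk,tk}R_{ij}\,dV$ plus further remainders $\mathcal{R}_{i}$.

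Collecting all contributions, the integrated hypothesis takes the schematic form
$$(1+c_{2}+c_{4}+c_{5}+c_{6})\int_{M}W_{tijk,tk}R_{ij}\,dV + \tfrac{1}{2}\int_{M}W_{tijk}R_{tj}R_{ik}\,dV + \sum_{i}c'_{i}\mathcal{R}_{i}=0.$$
Through the Bianchi-type identity $W_{tijk,t}=\tfrac{n-3}{n-2}\,C_{ijk}$ and the soliton--Cotton relation for $C_{ijk}$ on a gradient Ricci soliton, each $\mathcal{R}_{i}$ can be resolved into scalar multiples of $\int_{M}W_{tijk}R_{tj}R_{ik}\,dV$ that combine cleanly with the coefficient $\tfrac{1}{2}=\tfrac{1}{n-2}$ in front of the algebraic term. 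The integrated identity then reads as $(1+c_{2}+c_{4}+c_{5}+c_{6})$ times the integrated version of the hypothesis of Proposition \ref{MainProp} in dimension four; dividing by the nonvanishing factor and invoking Proposition \ref{MainProp} finishes the proof.

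The main obstacle is the meticulous bookkeeping of the commutator remainders $\mathcal{R}_{i}$: each swap of covariant derivatives on $W$ introduces quadratic curvature expressions of schematic form $W\ast\operatorname{Rm}$ which, on a gradient Ricci soliton, must be re-expressed via the soliton--Cotton identity as multiples of $W_{tijk}R_{tj}R_{ik}$ with precisely the multiplicity $\tfrac{1}{n-2}=\tfrac{1}{2}$. It is exactly this dimension-four-specific matching that makes the flexibility of the coefficient $c_{4}$ (free here, rather than the fixed $\tfrac{1}{n-3}=1$ appearing in Proposition \ref{MainProp}) possible.
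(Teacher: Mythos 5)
Your overall plan---integrate the hypothesis over $M$ and reduce by parts---can in fact be made to work here, but as written the proposal has two genuine gaps. First, the endpoint of your reduction is misidentified and the final step is circular. The individual terms do not reduce to multiples of $\int_M W_{tijk,tk}R_{ij}$ plus a leftover algebraic piece with ``commutator remainders''; what Lemma \ref{lem-32} actually shows is that each Weyl scalar equals an explicit combination of $|C|^2$, $CD$ and $|D|^2$ plus a divergence, obtained via $W_{tijk,t}=-\frac{n-3}{n-2}C_{ijk}$, $C_{ijk,i}=0$ and the integrability conditions of Proposition \ref{pro_int}, not via Ricci-identity commutators. In dimension four the unweighted integrals of $W_{tijk,tkj}f_i$, $W_{tijk,tk}R_{ij}$, $W_{tijk,t}R_{ik,j}$ and $W_{tijk}R_{ik,jt}$ all equal $\frac14\int_M|C|^2$, while $\int_M W_{tijk}R_{tj}R_{ik}=\frac12\int_M|C|^2$ (the $CD$ contribution carries the factor $n-4=0$) and $\int_M W_{tijk,tkji}=0$; so the integrated hypothesis collapses to $\frac14\pa{1+c_2+c_4+c_5+c_6}\int_M|C|^2=0$, whence $C\equiv0$ and, $M$ being compact, Einstein. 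You never reach this; instead you propose to feed the resulting \emph{integral} identity back into Proposition \ref{MainProp}, whose hypothesis is the \emph{pointwise} vanishing of a general Weyl scalar---a single integral identity does not give you that, so ``dividing by the factor and invoking Proposition \ref{MainProp}'' is not a legitimate step. Worse, the combination you claim to land on ($\int W_{tijk,tk}R_{ij}$ together with $\tfrac12\int W_{tijk}R_{tj}R_{ik}$, i.e.\ a Bach-type integrand) is exactly the degenerate one: for the Bach vector in $n=4$ the unweighted quadratic form has $\alpha=\beta=\gamma=0$, so such an identity would carry no information.

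Second, you make no use of the weighted measures $e^{-\omega f}$, which are the engine of the paper's method. The paper's proof identifies the hypothesis with $\mathrm{w}_{G}(\mathbf{A})\equiv0$ for $\mathbf{A}=\pa{\tfrac12,0,0,-c_2-c_5,-c_4-c_6,c_1}$ (your observation that the two $\pm c_3$ terms cancel upon integration is correct, since $\W{2}{2}=-\W{1}{1}$, and indeed $c_3$ drops out of $\mathbf{A}$), computes $\delta_0=0$ and $\delta_1=\tfrac{1}{16}\pa{1+c_2+c_4+c_5+c_6}\neq0$, so that $\delta_1^2-\delta_0\delta_2>0$, places $\mathbf{A}\in\Omega_d$, and concludes $C\equiv D\equiv 0$ from Proposition \ref{MainProp}. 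If you wish to avoid the weights, you must carry your reduction all the way down to $\int_M|C|^2$ with the explicit nonzero coefficient above; as it stands, the bookkeeping of your remainders $\mathcal{R}_i$ is entirely schematic and the stated conclusion does not follow from the steps given.
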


We recall that, for a gradient Ricci solitons, we have the validity of the \emph{first integrability condition} 
 \begin{align*}
    &C_{ijk}+f_t W_{tijk} = D_{ijk},
  \end{align*}
where $C$ and $D$ are the Cotton tensor and the three tensor introduced by H.-D. Cao and Q. Chen in \cite{caoche1}.
Note that if $D \equiv 0$ then $C\equiv 0$ (see \cite{caoche2}); moreover, when $M$ is compact, $C\equiv 0$ implies that $(M,g)$ is Einstein (see \cite{riolopez, munses}). Thus, a possible strategy to obtain the classification is to provide suitable assumptions ensuring the vanishing of $C$. The proof of our results can be divided in three steps:
\begin{itemize}
  \item[1.] first of all we obtain some pointwise identities for each Weyl scalar given by linear combinations of the three terms $|C|^2$, $|D|^2$ and $CD:=C_{ijk} D_{ijk}$, with possible remainder term of divergence type;
  \item[2.] secondly, exploiting the previous pointwise identities, we derive integral identities with parametric exponential weight of the type $e^{-\omega f}$, $\omega\in \RR$. More precisely, we prove that, for every $\omega\in\RR$, the weighted integral of a general Weyl scalar is given by the expression
  \begin{eqnarray*}
 \int_{M}\pa{ \alpha|C|^{2} + 2\beta \,CD + \gamma|D|^{2}}e^{-\omega f} ,
\end{eqnarray*}
with explicit coefficients $\alpha, \beta$ and $\gamma$ depending on $\omega$ and the Weyl scalar itself (Section \ref{sec3}).
  \item[3.] Finally, a simple algebraic argument  shows that the vanishing of a class of good Weyl scalars implies that $D\equiv C\equiv 0$ (Section \ref{sec4}).  
\end{itemize}

In the final Section \ref{sec5}, we provide some applications of the previous analysis and prove Propositions \ref{PR_1}, \ref{PR_BachGen}, \ref{PR_mix1} and \ref{PR_mix2}. Moreover, in Remark \ref{remnc} we discuss possible extensions to the noncompact case.

\

\section{Preliminaries}   \label{sec2}

The Riemann curvature
operator of a Riemannian manifold $(M^n,g)$ is defined
by
$$
\mathrm{Riem}(X,Y)Z=\nabla_{X}\nabla_{Y}Z-\nabla_{Y}\nabla_{X}Z-\nabla_{[X,Y]}Z\,.
$$
Throughout the article, the Einstein convention of summing over the repeated indices will be adopted. In a local coordinate system the components of the $(1, 3)$-Riemann
curvature tensor are given by
$R^{l}_{ijk}\tfrac{\partial}{\partial
  x^{l}}=\mathrm{Riem}\big(\tfrac{\partial}{\partial
  x^{j}},\tfrac{\partial}{\partial
  x^{k}}\big)\tfrac{\partial}{\partial x^{i}}$ and we denote by
$R_{ijkl}=g_{im}R^{m}_{jkl}$ its $(0, 4)$-version. The Ricci tensor is obtained by the contraction
$R_{ik}=g^{jl}R_{ijkl}$ and $R=g^{ik}R_{ik}$ will
denote the scalar curvature. The so called Weyl tensor is then
defined by the following decomposition formula (see~\cite[Chapter~3,
Section~K]{gahula}) in dimension $n\geq 3$,
\begin{eqnarray*}
\label{Weyl}
W_{ijkl}  & = & R_{ijkl} \, - \, \frac{1}{n-2} \, (R_{ik}g_{jl}-R_{il}g_{jk}
+R_{jl}g_{ik}-R_{jk}g_{il})  \nonumber \\
&&\,+\frac{R}{(n-1)(n-2)} \,
(g_{ik}g_{jl}-g_{il}g_{jk})\, \, .
\end{eqnarray*}
The Weyl tensor shares the symmetries of the curvature
tensor. Moreover, as it can be easily seen by the formula above, all of its contractions with the metric are zero, i.e. $W$ is totally trace-free. In dimension three, $W$ is identically zero on every Riemannian manifold, whereas, when $n\geq 4$, the vanishing of the Weyl tensor is
a relevant condition, since it is  equivalent to the local
  conformal flatness of $(M^n,g)$. We also recall that in dimension $n=3$,  local conformal
  flatness is equivalent to the vanishing of the Cotton tensor
\begin{equation*}\label{def_cot}
C_{ijk} =  R_{ij,k} - R_{ik,j}  -
\frac{1}{2(n-1)}  \big( R_k  g_{ij} -  R_j
g_{ik} \big)\,,
\end{equation*}
where $R_{ij,k}=\nabla_k R_{ij}$ and $R_k=\nabla_k R$ denote, respectively, the components of the covariant derivative of the Ricci tensor and of the differential of the scalar curvature.
By direct computation, we can see that the Cotton tensor $C$
satisfies the following symmetries
\begin{equation*}\label{CottonSym}
C_{ijk}=-C_{ikj},\,\quad\quad C_{ijk}+C_{jki}+C_{kij}=0\,,
\end{equation*}
moreover it is totally trace-free,
\begin{equation*}\label{CottonTraces}
g^{ij}C_{ijk}=g^{ik}C_{ijk}=g^{jk}C_{ijk}=0\,,
\end{equation*}
by its skew--symmetry and Schur lemma.  Furthermore, it satisfies
\begin{equation*}\label{eq_nulldivcotton}
C_{ijk,i} = 0,
\end{equation*}
see for instance \cite[Equation 4.43]{catmasmonrig}. We recall that, for $n\geq 4$,  the Cotton tensor can also be defined as one of the possible divergences of the Weyl tensor:
 \begin{equation*}\label{def_Cotton_comp_Weyl}
 C_{ijk}=\pa{\frac{n-2}{n-3}}W_{tikj, t}=-\pa{\frac{n-2}{n-3}}W_{tijk, t}.
 \end{equation*}
 A computation shows that the two definitions coincide (see e.g. \cite{alimasrig}).

 In what follows a relevant role will be played by the \emph{Bach tensor}, first introduced in general relativity by Bach, \cite{bac}. By definition we have
 \begin{equation*}\label{def_Bach_comp}
   B_{ij} = \frac{1}{n-3}W_{ikjl, lk} + \frac{1}{n-2}R_{kl}W_{ikjl} = \frac{1}{n-2}\pa{C_{jik, k}+R_{kl}W_{ikjl}}.
 \end{equation*}

  A computation using the commutation rules for the second covariant derivative of the Weyl tensor or of the Schouten tensor (see \cite{catmasmonrig}) shows that the Bach tensor is symmetric (i.e. $B_{ij}=B_{ji}$); it is also evidently trace-free (i.e. $B_{ii}=0$). It is worth reporting here the following interesting formula for the divergence of the Bach tensor (see e. g. \cite{caoche2} for its proof)
\begin{equation*}\label{diverBach}
  B_{ij, j} = \frac{(n-4)}{\pa{n-2}^2}R_{kt}C_{kti}.
\end{equation*}

We recall here some useful equations satisfied by every gradient Ricci soliton $(M^n,g)$
 \begin{equation*}\label{def_sol}
   R_{ij}+f_{ij}=\lambda g_{ij}, \quad \lambda \in \erre,
 \end{equation*}
where $f_{ij}=\nabla_i\nabla_j f$ are the components of the Hessian of $f$ (see e.g. \cite{emilanman}).
\begin{lemma} Let $(M^n,g)$ be a gradient Ricci soliton of dimension $n\geq 3$. Then
\begin{equation*}\label{eq_tra}
\Delta f + R = n \lambda
\end{equation*}
\begin{equation*}\label{eq_sch}
R_i = 2 f_t R_{it}
\end{equation*}
\begin{equation*}\label{eq_hamide}
R + |\nabla f|^2 = 2\lambda f + c
\end{equation*}
for some constant $c\in\RR$.

\end{lemma}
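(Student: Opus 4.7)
The plan is to derive each identity in turn from the soliton equation $R_{ij}+f_{ij}=\lambda g_{ij}$, since each follows from a standard tensorial manipulation (trace, divergence, differentiation).

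First, I would take the metric trace of the soliton equation. Contracting with $g^{ij}$ gives $R+\Delta f=n\lambda$, which is the first identity. This also yields $\nabla_i\Delta f=-\nabla_i R$, a fact I will reuse below.

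Next, for the second identity, I would take the divergence of the soliton equation. Using the contracted second Bianchi identity $\nabla^j R_{ij}=\tfrac{1}{2}\nabla_i R$, and the commutation rule for third covariant derivatives of a function
\[
\nabla^j\nabla_j\nabla_i f \;=\; \nabla_i\Delta f + R_{ij}f^{j},
\]
the divergence of $R_{ij}+f_{ij}=\lambda g_{ij}$ becomes
\[
\tfrac{1}{2}\nabla_i R + \nabla_i\Delta f + R_{ij}f^{j}\;=\;0.
\]
Substituting $\nabla_i\Delta f=-\nabla_i R$ from the first step gives $-\tfrac{1}{2}\nabla_i R + R_{ij}f^{j}=0$, i.e.\ $R_i=2 f^t R_{it}$, the second identity.

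Finally, for the third identity, I would show that the function $R+|\nabla f|^{2}-2\lambda f$ has vanishing gradient, so that (on the connected manifold $M$) it is a constant $c$. Differentiating and using the soliton equation $f_{ij}=\lambda g_{ij}-R_{ij}$,
\[
\nabla_i\bigl(R+|\nabla f|^{2}-2\lambda f\bigr) \;=\; R_i+2 f^{j}f_{ij}-2\lambda f_i \;=\; R_i-2R_{ij}f^{j},
\]
which vanishes thanks to the second identity. Hence $R+|\nabla f|^{2}=2\lambda f+c$.

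None of the steps presents a genuine obstacle: the only mildly delicate point is the commutation of covariant derivatives in the second step (which introduces the Ricci term), but this is a direct application of the Ricci identity for the Hessian. The derivation is in fact the standard one and appears, for instance, in \cite{emilanman}.
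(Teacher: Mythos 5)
Your proof is correct and is exactly the standard derivation (trace, then divergence plus the contracted Bianchi identity and the Ricci commutation rule, then showing $R+|\nabla f|^2-2\lambda f$ has vanishing gradient on the connected manifold $M$); the paper itself omits the proof and simply cites the literature for these well-known identities. No issues.
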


The tensor $D$, introduced by H.-D. Cao and Q. Chen  in \cite{caoche1}, turns out to be a fundamental tool in the study of the geometry of gradient Ricci solitons (more in general for gradient Einstein-type manifolds, see \cite{cmmrET}). In components it is defined as
 \begin{align}\label{def_D}
   D_{ijk}=&\frac{1}{n-2}\pa{f_kR_{ij}-f_jR_{ik}}+\frac{1}{(n-1)(n-2)}f_t\pa{R_{tk}g_{ij}-R_{tj}g_{ik}}\\\nonumber
 &\,-\frac{R}{(n-1)(n-2)}\pa{f_k g_{ij}-f_j g_{ik}}.
 \end{align}
 The $D$ tensor is skew-symmetric in the second and third indices (i.e. $D_{ijk}=-D_{ikj}$) and totally trace-free (i.e. $D_{iik}=D_{iki}=D_{kii}=0$).
Note that our convention for the tensor $D$ differs from that in \cite{caoche1}.

In the rest of the paper we use the notation
$$
CD = DC := C_{ijk}D_{ijk}.
$$
We also recall the four integrability conditions for gradient Ricci solitons of dimension $n\geq 3$ (see \cite{cmmVD} for the proof).
\begin{proposition}\label{pro_int}
  If $\pa{M^n, g}$ is a gradient Ricci soliton with potential function $f$, then the Cotton tensor,  the Bach tensor and the tensor $D$ satisfy the following conditions
\begin{eqnarray*}
\label{eq_1int} C_{ijk}+f_t W_{tijk} &=& D_{ijk}, \\ \label{eq_2int} (n-2)B_{ij} -\pa{\frac{n-3}{n-2}}f_tC_{jit} &=& D_{ijk, k},\\
  \label{eq_3int}R_{kt}C_{kti}&= &(n-2)D_{itk, tk},\\\label{eq_4int}
\frac{1}{2}|C|^2+R_{kt}C_{kti, i}& = &(n-2)D_{itk, tki}.
\end{eqnarray*}

\end{proposition}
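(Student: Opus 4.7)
The plan is to establish each of the four integrability conditions in turn, by repeatedly differentiating the soliton equation and applying the curvature commutation formulas together with the identities $R_i=2f_tR_{it}$ and $R_{ij,i}=\tfrac12 R_j$ already recalled in the preliminaries.

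For the first identity, I would start from $\nabla_k(R_{ij}+f_{ij})=0$, i.e., $R_{ij,k}=-f_{ijk}$. Skew-symmetrising in $(j,k)$ and using the Ricci identity $f_{ijk}-f_{ikj}=R_{ijkt}f_t$ (with sign dictated by the paper's convention) yields $R_{ij,k}-R_{ik,j}=R_{ijkt}f_t$. Plugging this into the definition of the Cotton tensor, eliminating the $R_j$-trace via $R_j=2R_{jt}f_t$, and decomposing $R_{ijkt}$ through the Weyl decomposition, the residual Ricci and scalar-curvature terms collapse exactly into the expression \eqref{def_D} for $D_{ijk}$; this gives $C_{ijk}+f_tW_{tijk}=D_{ijk}$.

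For the second identity, I would take the divergence $\nabla_k$ of the first and treat each piece separately. The derivative $C_{ijk,k}$ is supplied by the definition of the Bach tensor, which rearranges to $C_{jik,k}=(n-2)B_{ij}-R_{kl}W_{ikjl}$. For $\nabla_k(f_tW_{tijk})$, Leibniz and the soliton equation $f_{tk}=\lambda g_{tk}-R_{tk}$ yield $f_{tk}W_{tijk}=-R_{tk}W_{tijk}$ by trace-freeness of $W$, which cancels the Weyl-Ricci debris from the Bach piece; the remaining $f_tW_{tijk,k}$ is converted to $f_tC_{jit}$ via $W_{tijk,t}=-\tfrac{n-3}{n-2}C_{ijk}$ after using the Weyl symmetries to align indices. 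The result is precisely $(n-2)B_{ij}-\tfrac{n-3}{n-2}f_tC_{jit}=D_{ijk,k}$. For the third identity I would apply a further divergence $\nabla_j$, inserting the explicit formula for $B_{ij,j}$ recalled in Section~\ref{sec2}, again replacing each $f_{tj}$ by $\lambda g_{tj}-R_{tj}$, and using the antisymmetries of $C$ together with $C_{ijk,i}=0$ to kill all extraneous terms, leaving $R_{kt}C_{kti}=(n-2)D_{itk,tk}$.

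Finally, for the fourth identity I would apply $\nabla_i$ to the third, so that the left-hand side becomes $R_{kt,i}C_{kti}+R_{kt}C_{kti,i}$. The subtle point is the first summand: using the definition of $C$ to replace $R_{kt,i}$, then exploiting the skew-symmetry $C_{kti}=-C_{kit}$ and trace-freeness to discard the $R_ig_{kt}$ and $R_tg_{ki}$ terms, one obtains the elementary algebraic identity $R_{kt,i}C_{kti}=\tfrac12|C|^2$. Combined with the already identified $R_{kt}C_{kti,i}$, this gives the fourth integrability condition. The main obstacle throughout is not conceptual but the bookkeeping of the many index symmetries — Cotton, Weyl and $D$ all carry different antisymmetry/trace profiles, and the divergence formula for $W$ appears in several guises — so one must carefully ensure that at each differentiation step the Ricci and scalar-curvature debris collapses into the expressions on the right-hand sides rather than producing spurious remainders.
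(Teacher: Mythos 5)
Your outline is correct and is essentially the standard derivation: differentiate the soliton equation, commute derivatives to get $R_{ij,k}-R_{ik,j}=R_{ijkt}f_t$, decompose via Weyl to obtain the first identity, then take successive divergences using the Bach-tensor definition, $\diver B$, $C_{ijk,i}=0$, the trace/antisymmetry properties, and the algebraic identity $R_{kt,i}C_{kti}=\tfrac12|C|^2$. This is the same route as the proof the paper points to (it does not prove Proposition \ref{pro_int} itself but cites \cite{cmmVD}, which in turn follows Cao--Chen), so there is nothing genuinely different to compare.
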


\

\section{Weyl scalars on a Ricci soliton} \label{sec3}

We define the following ten {\em Weyl scalars}:
\begin{eqnarray}\label{filippo}
&&\w{0}{1}:= W_{tijk}f_{tj}f_{ik},\quad
\nonumber\w{0}{2}:= W_{tijk}f_{ik}f_{t}f_{j},\quad
\nonumber\w{0}{3}:= W_{tijk}f_{ikj}f_{t},\quad
\nonumber\w{0}{4}:= W_{tijk}f_{ikjt},\\
&&\nonumber\w{1}{1}:= W_{tijk,t}f_{ik}f_{j},\quad
\nonumber\w{1}{2}:= W_{tijk,t}f_{ikj},\\
&&\nonumber\w{2}{1}:= W_{tijk,tk}f_{ij},\quad
\nonumber\w{2}{2}:= W_{tijk,tk}f_{i}f_{j},\\
&&\nonumber\w{3}{1}:= W_{tijk,tkj}f_{i},\\
&&\nonumber\w{4}{1}:= W_{tijk,tkji}.
\end{eqnarray}
Note that these are the only scalar quantities wich depend linearly on the Weyl tensor and its divergences. Moreover, all this functions have the same homogeneity under rescaling of the metric. Indeed, if $\tilde{g}=\lambda \,g$, $\lambda\in\RR$, then $\tilde{\w{a}{b}}=\lambda^{-3}\,\w{a}{b}$. We can now define a {\em general Weyl scalar} as follows:

\begin{defi} A {\em general Weyl scalar} is a linear combination of the ten Weyl scalars previously defined, i.e. a function $\mathrm{w}$ of the type
$$
\mathrm{w}_{G}=a^{p}_{0}\w{0,p}+a^{q}_{1}\w{1}{q}+a^{r}_{2}\w{2}{r}+a^{1}_{3}\w{3}{1}+a^{1}_{4}\w{4}{1}
$$
with $a^{p}_{0}, a^{q}_{1}, a^{r}_{2}, a^{1}_{3}, a^{1}_{4}\in\RR$ and $p=1,\dots,4$, $q,r=1,2$.
\end{defi}

\subsection{Pointwise identities} We now obtain some pointwise identities for each Weyl scalar given by linear combinations of the three terms $|C|^2$, $|D|^2$ and $CD:=C_{ijk} D_{ijk}$, with possible remainder term of divergence type. Indeed we have

\begin{lemma}\label{lem-32}
Let $(M^n,g)$, $n\geq 3$, be a gradient Ricci soliton with potential function $f$. Then the Weyl scalars defined in \eqref{filippo} satisfy the following pointwise identities:
\begin{eqnarray*}
&&\w{0}{1}= W_{tijk}R_{tj}R_{ik} = \frac12|C|^{2}+\frac{(n-4)}{2}CD+(W_{tijk}R_{ij}f_{t})_{k},\\
&&\w{0}{2}= -W_{tijk}R_{ik}f_{t}f_{j} = \frac{(n-2)}{2}\pa{|D|^{2}-CD},\\
&&\w{0}{3}= -W_{tijk}R_{ik,j}f_{t} = \frac12\pa{CD-|C|^{2}},\\
&&\w{0}{4}= -W_{tijk}R_{ik,jt} = -\frac{(n-3)}{2(n-2)}|C|^{2}-(W_{tijk}R_{tj,i})_{k},\\
&&\w{1}{1}= -W_{tijk,t}R_{ik}f_{j} = -\frac{(n-3)}{2}CD,\\
&&\w{1}{2}= -W_{tijk,t}R_{ik,j} = -\frac{(n-3)}{2(n-2)}|C|^{2},\\
&&\w{2}{1}= -W_{tijk,tk}R_{ij} = -\frac{(n-3)}{2(n-2)}|C|^{2}-(W_{tijk,t}R_{ij})_{k},\\
&&\w{2}{2}= W_{tijk,tk}f_{i}f_{j} = \frac{(n-3)}{2}CD + (W_{tijk,t}f_{i}f_{j})_{k},\\
&&\w{3}{1}= W_{tijk,tkj}f_{i} = \frac{(n-3)}{2(n-2)}|C|^{2} + (W_{tijk,t}f_{i})_{kj},\\
&&\w{4}{1}= W_{tijk,tkji}.
\end{eqnarray*}
\end{lemma}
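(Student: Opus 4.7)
The plan is to verify the ten identities by a common set of moves based on four workhorse facts: (i) the soliton equation $f_{ij} = \lambda g_{ij} - R_{ij}$ together with the total trace-freeness of $W$, which converts any Hessian of $f$ contracted with $W$ into minus a Ricci contracted with $W$; (ii) the first integrability condition $f_t W_{tijk} = D_{ijk} - C_{ijk}$ from Proposition~\ref{pro_int}, which trades a contraction of $W$ against $\nabla f$ for the difference $D-C$; (iii) the formula $W_{tijk,t} = -\tfrac{n-3}{n-2}C_{ijk}$ expressing the Cotton tensor as a divergence of Weyl; and (iv) the pointwise Leibniz rule to harvest the divergence remainders. Supporting these are the antisymmetries $C_{ijk}=-C_{ikj}$, $D_{ijk}=-D_{ikj}$, the total trace-freeness of $W$, $C$, $D$, and the identity $R_i = 2 R_{it}f_t$.

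For the four zeroth-order scalars $\w{0}{p}$ I would proceed as follows. For $\w{0}{1}$ and $\w{0}{2}$, substituting $f_{ij} = \lambda g_{ij} - R_{ij}$ and using trace-freeness of $W$ kills the $\lambda g_{ij}$ pieces and gives $\w{0}{1} = W_{tijk}R_{tj}R_{ik}$ and $\w{0}{2} = -W_{tijk}R_{ik}f_t f_j$. Applying (ii) to turn $W_{tijk}f_t$ into $D_{ijk}-C_{ijk}$ reduces $\w{0}{2}$ to a linear combination of $C_{ijk}R_{ik}f_j$ and $D_{ijk}R_{ik}f_j$, both of which are computed by expanding according to the explicit form \eqref{def_D} of $D$ and the Cotton definition. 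For $\w{0}{1}$, one peels off a $k$-derivative via (iv) to expose $(W_{tijk}R_{ij}f_t)_k$ and converts the residue to $|C|^2$ and $CD$ via the first integrability condition. For $\w{0}{3}$ and $\w{0}{4}$, the key move is to exploit the antisymmetry $W_{tijk} = -W_{tikj}$ and write $W_{tijk}R_{ik,j} = \tfrac{1}{2}W_{tijk}(R_{ik,j} - R_{ij,k})$; substituting the Cotton definition and using the trace-freeness of $W$ to kill the metric pieces yields $W_{tijk}R_{ik,j} = -\tfrac{1}{2}W_{tijk}C_{ijk}$, which combined with (ii) produces the stated $|C|^2$ and $CD$ form in $\w{0}{3}$; for $\w{0}{4}$, one further peels off the remaining $t$-derivative using (iv) to produce the divergence term $(W_{tijk}R_{tj,i})_k$ plus the $\w{1}{2}$-type residue.

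For the scalars carrying divergences of Weyl ($\w{1}{1},\w{1}{2},\w{2}{1},\w{2}{2},\w{3}{1}$), I would first invoke (iii) to replace $W_{tijk,t}$ by $-\tfrac{n-3}{n-2}C_{ijk}$, reducing, for instance, $\w{1}{1}$ to $\tfrac{n-3}{n-2}C_{ijk}R_{ik}f_j$ and $\w{1}{2}$ to $\tfrac{n-3}{n-2}C_{ijk}R_{ik,j}$; combined with the atomic identities $C_{ijk}R_{ik}f_j = -\tfrac{n-2}{2}CD$ (obtained by expanding $CD$ from \eqref{def_D}) and $C_{ijk}R_{ik,j} = -\tfrac{1}{2}|C|^2$ (by the $\w{0}{3}$ argument), these give the claimed values. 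For $\w{2}{1},\w{2}{2},\w{3}{1}$, repeated application of (iv) exposes the stated divergence remainders $(W_{tijk,t}R_{ij})_k$, $(W_{tijk,t}f_i f_j)_k$, $(W_{tijk,t}f_i)_{kj}$ and leaves a bulk term that collapses to $|C|^2$ or $CD$ via the already-established $\w{1}{\cdot}$ identities. Finally $\w{4}{1}$ requires no rewriting.

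The main obstacle will be the bookkeeping of constants and indices: the asymmetric form of $D$ in \eqref{def_D}, with its $\tfrac{1}{n-2}$ and $\tfrac{1}{(n-1)(n-2)}$ prefactors, together with the $\tfrac{1}{2(n-1)}$ piece of the Cotton tensor, makes each ``atomic'' contraction such as $C_{ijk}R_{ik}f_j$, $D_{ijk}R_{ik}f_j$ and $W_{tijk}R_{tj}R_{ik}$ a nontrivial but self-contained computation relying only on antisymmetry, total trace-freeness, and the soliton identity $R_i = 2R_{it}f_t$. I would therefore organize the proof by first tabulating these atomic contractions once and for all, and then deriving the ten identities as short algebraic substitutions combined with at most two applications of the pointwise Leibniz rule.
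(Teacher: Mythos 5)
Your proposal is correct and is essentially the paper's own argument: the paper proves only the first identity, using exactly your four ingredients (the soliton equation plus total trace-freeness of $W$, the pointwise Leibniz rule, the relation $W_{tijk,t}=-\tfrac{n-3}{n-2}C_{ijk}$, and the first integrability condition $f_tW_{tijk}=D_{ijk}-C_{ijk}$), and declares the other nine ``similar''; your atomic contractions $C_{ijk}R_{ik,j}=-\tfrac12|C|^{2}$, $C_{ijk}R_{ik}f_j=-\tfrac{n-2}{2}CD$, $D_{ijk}R_{ik}f_j=-\tfrac{n-2}{2}|D|^{2}$ and $W_{tijk}R_{ik,j}=-\tfrac12 W_{tijk}C_{ijk}$ all check out and do yield the stated identities. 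One caveat you will hit when executing the plan: peeling the outer $t$-derivative off $\w{0}{4}$ gives $\w{0}{4}=-\w{1}{2}+\nabla_t\pa{W_{tijk}f_{ikj}}=+\tfrac{n-3}{2(n-2)}|C|^{2}-(W_{tijk}R_{tj,i})_{k}$, i.e.\ the opposite sign on the $|C|^{2}$ term from the one printed; since $\w{0}{4}+\w{1}{2}$ is an exact divergence, the printed signs for $\w{0}{4}$ and $\w{1}{2}$ cannot both hold, so you should expect to correct that line (and its downstream integral identity) rather than reproduce it.
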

\begin{proof} We give only the proof of the first identity. Using the soliton equation and Proposition \ref{pro_int} one has
\begin{align*}
\w{0}{1}&= W_{tijk}f_{tj}f_{ik} = W_{tijk}R_{tj}R_{ik}\\
&= W_{tikj}f_tjR_ik = (W_{tikj}f_t R_{ik})_j - W_{jkit,j}f_tR_{ik}-W_{tikj}f_t R_{ik,j} \\
&= (W_{tijk}f_t R_{ij})_k+\frac{(n-3)}{(n-2)}f_t R_{ik} C_{kit} - \frac12 W_{tikj}f_t C_{ikj} \\
&= \frac12|C|^{2}+\frac{(n-4)}{2}CD+(W_{tijk}R_{ij}f_{t})_{k}.
\end{align*}
The other ones are similar.
\end{proof}

\subsection{Integral identities} We first derive integral identities with a general weight function depending on the $f$.

\begin{lemma}
Let $(M^n,g)$, $n\geq 3$, be a gradient Ricci soliton with potential function $f$. For every $\psi:\mathbb{R}\to\mathbb{R}$, smooth function with $\psi(f)$ having compact support in $M$, the Weyl scalars defined in \eqref{filippo} satisfy the following weighted integral identities:
\begin{eqnarray*}
&&
\W{0}{1}:=\int_{M}\w{0}{1}\,\psi(f) = \frac{1}{2}\int_{M}\set{\psi(f)|C|^{2}+\sq{(n-4)\psi(f)+(n-2)\psi'(f)}CD-(n-2)\psi'(f)|D|^{2}} ;\\
&&\W{0}{2}:=\int_{M}\w{0}{2}\,\psi(f) = \frac{(n-2)}{2}\int_{M}\psi(f)\pa{|D|^{2}-CD} ;\\
&&\W{0}{3}:=\int_{M}\w{0}{3}\,\psi(f) = \frac12 \int_{M}\psi(f)\pa{CD-|C|^{2}} ;\\
&&\W{0}{4}:=\int_{M}\w{0}{4}\,\psi(f) =\frac12\int_{M}\set{\sq{\psi'(f)-\frac{(n-3)}{(n-2)}\psi(f)}|C|^{2}-\psi'(f)\,CD};\\
&&\W{1}{1}:=\int_{M}\w{1}{1}\,\psi(f) = -\frac{(n-3)}{2}\int_{M} \psi(f)\,CD,
\end{eqnarray*}
\begin{eqnarray*}
&&\W{1}{2}:=\int_{M}\w{1}{2}\,\psi(f) = -\frac{(n-3)}{2(n-2)}\int_{M} \psi(f)|C|^{2},\\
&&\W{2}{1}:=\int_{M}\w{2}{1}\,\psi(f) = -\frac{(n-3)}{2}\int_{M}\set{\frac{1}{n-2}\psi(f)|C|^{2}+\psi'(f)CD};\\
&&\W{2}{2}:=\int_{M}\w{2}{2}\,\psi(f) = \frac{(n-3)}{2}\int_{M} \psi(f)\,CD ;\\
&&\W{3}{1}:=\int_{M}\w{3}{1}\,\psi(f) = \frac{(n-3)}{2(n-2)}\int_{M}\psi(f)|C|^{2} ;\\
&&\W{4}{1}:=\int_{M}\w{4}{1}\,\psi(f) = -\frac{(n-3)}{2(n-2)}\int_{M}\psi'(f)|C|^{2} \,.
\end{eqnarray*}

\end{lemma}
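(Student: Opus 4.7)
The plan is to derive each of the ten weighted integral identities from the corresponding pointwise identity in Lemma~\ref{lem-32}, by multiplying by $\psi(f)$, integrating over $M$, and handling any divergence remainders via integration by parts (legitimate since $\psi(f)$ has compact support). Five of the pointwise identities carry no divergence remainder, so the formulas for $\W{0}{2}$, $\W{0}{3}$, $\W{1}{1}$ and $\W{1}{2}$ follow at once.

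For the remaining cases, I would first assemble a small algebraic toolbox: the first integrability $f_tW_{tijk}=D_{ijk}-C_{ijk}$, the pair-symmetry maneuver $f_kW_{tijk}=-f_kW_{kjti}=C_{jti}-D_{jti}$ (moving $f$ from slot~4 to slot~1 by successive use of $W_{tijk}=W_{jkti}$ and antisymmetry in the first two indices), and the four contractions
\begin{align*}
f_kR_{ij}D_{ijk}&=\tfrac{n-2}{2}|D|^{2}, & f_kR_{ij}C_{ijk}&=\tfrac{n-2}{2}CD,\\
D_{ijk}R_{ij,k}&=\tfrac{1}{2}CD, & C_{ijk}R_{ij,k}&=\tfrac{1}{2}|C|^{2},
\end{align*}
each of which follows from the antisymmetry of $C$ and $D$ in their last two indices combined with total trace-freeness (and, for the derivative identities, from the Cotton expression $R_{ij,k}-R_{ik,j}=C_{ijk}+\frac{1}{2(n-1)}(R_kg_{ij}-R_jg_{ik})$). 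With these facts, each of the first-order divergence cases $\W{0}{1}$, $\W{0}{4}$, $\W{2}{1}$, $\W{2}{2}$ is dispatched by a single integration by parts: the boundary contribution $-\int \psi'(f)f_k(\text{stuff})_k$ assembles directly into the prescribed linear combination of $\int \psi(f)|C|^{2}$, $\int \psi(f)CD$, $\int \psi'(f)|C|^{2}$, $\int \psi'(f)CD$ and $\int \psi'(f)|D|^{2}$.

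For $\W{3}{1}$ the pointwise identity contains a double divergence $(W_{tijk,t}f_i)_{,kj}$, so I would integrate by parts twice and get bulk contributions weighted by $\psi''(f)f_jf_k$ and $\psi'(f)f_{jk}$, each paired with $V_{jk}:=W_{tijk,t}f_i$. The former vanishes because $V_{jk}$ is antisymmetric while $f_jf_k$ is symmetric in $(j,k)$; for the latter, the soliton identity $f_{jk}=\lambda g_{jk}-R_{jk}$ kills the metric piece by tracelessness of Weyl and reduces the rest to $R_{jk}C_{ijk}f_i$, which vanishes since $R$ is symmetric and $C$ antisymmetric in $(j,k)$. Thus only the stated $|C|^{2}$ contribution survives. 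For $\W{4}{1}$ I would write $W_{tijk,tkji}=(W_{tijk,tkj})_{,i}$, integrate by parts once to obtain $-\int \psi'(f)\,\w{3}{1}$, and then apply the pointwise identity for $\w{3}{1}$; the residual double divergence cancels by the same argument as above, now with $\psi'$ in place of $\psi$. The main obstacle is precisely this bookkeeping in the double-divergence cases: one must verify that every $\psi''$- and $\psi'''$-weighted contribution vanishes identically thanks to the Weyl symmetries, so that only $\psi$- and $\psi'$-weighted terms appear in the final formula.
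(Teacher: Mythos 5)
Your proposal is correct and follows essentially the same route as the paper: multiply the pointwise identities of Lemma \ref{lem-32} by $\psi(f)$, integrate the divergence remainders by parts, and reduce the resulting boundary contributions to $|C|^2$, $CD$, $|D|^2$ via the first integrability condition, the Weyl/Cotton/$D$ symmetries, and exactly the four contraction identities you list (all of which check out). The only differences are cosmetic — e.g.\ for $\W{4}{1}$ you integrate by parts once and reuse the $\w{3}{1}$ identity where the paper moves all three derivatives onto $\psi(f)$ and kills the $\psi''$, $\psi'''$ terms by symmetry — and the harmless miscount of ``five'' remainder-free identities where you then correctly name the four relevant ones.
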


\begin{proof} Using Lemma \ref{lem-32}, integrating by parts and using the definitions of $C$ and $D$ we obtain

\begin{eqnarray*}
\int_{M}(W_{tijk}R_{ij}f_{t})_{k} \,\psi(f) &=& -\int_{M}W_{tijk}R_{ij}f_{t}f_{k}\, \psi'(f) = -\frac{n-2}{2}\int_{M} \pa{D_{ijk}-C_{ijk}}D_{ijk}\,\psi'(f) \\
&=& \frac{n-2}{2}\int_{M} \pa{CD-|D|^{2}}\,\psi'(f) \,,
\end{eqnarray*}
\begin{eqnarray*}
\int_{M}(W_{tijk}R_{tj,i})_{k} \,\psi(f) &=& -\int_{M}W_{tijk}R_{tj,i}f_{k}\, \psi'(f) = -\int_{M}W_{tijk}R_{ik,j}f_{t}\, \psi'(f)
\\&=& \frac12\int_{M} \pa{D_{ijk}-C_{ijk}}C_{ijk}\,\psi'(f)
= \frac12\int_{M} \pa{CD-|C|^{2}}\,\psi'(f) \,
\end{eqnarray*}
and
\begin{eqnarray*}
\int_{M}(W_{tijk,t}R_{ij})_{k} \,\psi(f) &=& -\int_{M}W_{tijk,t}R_{ij}f_{k}\, \psi'(f) = \frac{(n-3)}{(n-2)}\int_{M}C_{ijk}R_{ij}f_{k}\, \psi'(f)
\\ &=& \frac{(n-3)}{2}\int_{M} CD\,\psi'(f) \,.
\end{eqnarray*}
Moreover, by the simmetries of Weyl one has
\begin{eqnarray*}
\int_{M}(W_{tijk,t}f_{i}f_{j})_{k} \,\psi(f) &=& -\int_{M}W_{tijk,t}f_{i}f_{j}f_{k}\, \psi'(f) = 0\,,
\end{eqnarray*}
\begin{eqnarray*}
\int_{M}(W_{tijk,t}f_{i})_{kj} \,\psi(f) &=& \int_{M}W_{tijk,t}f_{i}\, \sq{\psi(f)}_{jk} = 0 \,.
\end{eqnarray*}
Finally
\begin{eqnarray*}
\int_{M}W_{tijk,tkji} \,\psi(f) &=& -\int_{M}W_{tijk,t}\, \sq{\psi(f)}_{ijk} \\
&=& -\int_{M}W_{tijk,t}\, \sq{\psi'(f)f_{ij}+\psi''(f)f_{i}f_{j}}_{k} \\
&=& - \int_{M}W_{tijk,t}\, \sq{\psi''(f)f_{ij}f_{k}+\psi'(f)f_{ijk}+\psi'''(f)f_{i}f_{j}f_{k}+\psi''(f)f_{ik}f_{j}+\psi''(f)f_{i}f_{kj}}\\
&=& - \int_{M}W_{tijk,t}\, \sq{\psi'(f)f_{ijk}} =  \int_{M}W_{tijk,t}R_{ij,k} \,\psi'(f) = -\frac{(n-3)}{2(n-2)}\int_{M}|C|^{2}\psi'(f)\,.
\end{eqnarray*}

\end{proof}

In particular, it follows that only six integrals are independent. Indeed, a simple computation shows that
\begin{cor} \label{cor-deprels} The following identities holds
\begin{eqnarray*}
\W{0}{3}&=&-\frac{1}{(n-2)}\W{1}{1}+\frac{(n-2)}{(n-3)}\W{1}{2} ;\\
\W{0}{4} &=& \frac{(n-4)}{(n-3)}\W{1}{2} + \frac{1}{(n-3)}\W{2}{1}-\frac{(n-2)}{(n-3)}\W{4}{1} ;\\
\W{2}{2} &=& - \W{1}{1} ;\\
\W{3}{1} &=& - \W{1}{2}\,.
\end{eqnarray*}

\end{cor}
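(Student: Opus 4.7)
The plan is to verify each of the four stated identities by direct substitution of the closed-form expressions for the individual weighted integrals $\W{a}{b}$ obtained in the previous lemma. Each of those expressions is, after the integration by parts performed there, a linear combination of at most six basic integrals over $M$, namely
\[
\int_M \psi(f)|C|^{2},\quad \int_M \psi(f)\,CD,\quad \int_M \psi(f)|D|^{2},\quad \int_M \psi'(f)|C|^{2},\quad \int_M \psi'(f)\,CD,\quad \int_M \psi'(f)|D|^{2}.
\]
The claim of the corollary is then purely linear algebra: four of the ten integrals $\W{a}{b}$ are redundant because the corresponding integrands can be rewritten in terms of the others.

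First I would dispatch the two easy identities. From the lemma one reads off $\W{2}{2}=\tfrac{n-3}{2}\int_M\psi(f)\,CD$ and $\W{1}{1}=-\tfrac{n-3}{2}\int_M\psi(f)\,CD$, so $\W{2}{2}=-\W{1}{1}$ is immediate; analogously $\W{3}{1}=\tfrac{n-3}{2(n-2)}\int_M\psi(f)|C|^{2}=-\W{1}{2}$.

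For the remaining two identities I would substitute each $\W{a}{b}$ appearing on both sides by its explicit representation and match the coefficients of the six basic integrals above. For the identity involving $\W{0}{3}$ only the two integrals $\int_M\psi(f)|C|^{2}$ and $\int_M\psi(f)\,CD$ enter, and one finds on the right-hand side
\[
-\tfrac{1}{n-3}\bigl(-\tfrac{n-3}{2}\bigr)\!\!\int_M\!\psi(f)\,CD+\tfrac{n-2}{n-3}\bigl(-\tfrac{n-3}{2(n-2)}\bigr)\!\!\int_M\!\psi(f)|C|^{2}=\tfrac12\!\!\int_M\!\psi(f)(CD-|C|^{2})=\W{0}{3}.
\]
For the identity involving $\W{0}{4}$ one has to manage four basic integrals simultaneously ($|C|^{2}$ and $CD$ weighted by $\psi$ and by $\psi'$); writing $\W{0}{4}$, $\W{1}{2}$, $\W{2}{1}$ and $\W{4}{1}$ as $2\times 2$ vectors in the coefficient space and checking that the linear combination on the right equals the vector on the left is a short bookkeeping exercise.

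I do not expect any serious obstacle: once the lemma is in place, all divergence-type remainder terms in the pointwise identities have already been converted to $\psi'(f)$-weighted integrals, so no further integration by parts or use of the soliton/integrability identities is required. The only point that needs care is consistent tracking of the coefficients $\tfrac{n-3}{n-2}$, $\tfrac{1}{n-2}$ and $\tfrac{1}{n-3}$ through the substitutions; a small slip there would produce spurious linear combinations and is the most likely source of computational error.
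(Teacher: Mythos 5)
Your approach is exactly the one the paper intends: the corollary is a purely linear-algebraic consequence of the preceding lemma, obtained by substituting the closed-form expression of each $\W{a}{b}$ in terms of the basic weighted integrals of $|C|^{2}$, $CD$, $|D|^{2}$ (against $\psi(f)$ and $\psi'(f)$) and matching coefficients. Your treatment of $\W{2}{2}=-\W{1}{1}$, $\W{3}{1}=-\W{1}{2}$ and of the $\W{0}{4}$ relation is correct.

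There is, however, a discrepancy you pass over in the first identity. What you actually verify is $\W{0}{3}=-\tfrac{1}{n-3}\W{1}{1}+\tfrac{n-2}{n-3}\W{1}{2}$, whereas the statement reads $-\tfrac{1}{n-2}\W{1}{1}$. Given the lemma's formulas $\W{0}{3}=\tfrac12\int_{M}\psi(f)\pa{CD-|C|^{2}}$ and $\W{1}{1}=-\tfrac{n-3}{2}\int_{M}\psi(f)\,CD$, the term $-\tfrac{1}{n-2}\W{1}{1}$ contributes $\tfrac{n-3}{2(n-2)}\int_{M}\psi(f)\,CD$ rather than the required $\tfrac12\int_{M}\psi(f)\,CD$, so the identity as printed does not follow from the lemma (indeed it would force $\int_{M}\psi(f)\,CD=0$), while your version with $-\tfrac{1}{n-3}$ is the correct one. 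In other words, you have silently corrected what is evidently a typo in the statement; you should flag this explicitly instead of presenting the computation as a verification of the printed formula, since as written your displayed right-hand side is not the right-hand side of the identity you claim to be proving.
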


Since $(M^{n},g)$ is compact we choose $\psi(f):=e^{-\omega f}$, with $\omega\in\RR$, thus obtaining

\begin{cor}\label{cor127}
Let $(M^n,g)$, $n\geq 3$, be a compact gradient Ricci soliton with potential function $f$. Then, for every $\omega\in\RR$, the Weyl scalars defined in \eqref{filippo} satisfy the following weighted integral identities:
\begin{eqnarray*}
&&\W{0}{1}=\int_{M}\w{0}{1}\,e^{-\omega f} = \frac{1}{2}\int_{M}\set{|C|^{2}+\sq{(n-4)-(n-2)\omega}CD+(n-2)\omega|D|^{2}}e^{-\omega f} ;\\
&&\W{0}{2}=\int_{M}\w{0}{2}\,e^{-\omega f} = \frac{(n-2)}{2}\int_{M}\pa{|D|^{2}-CD} e^{-\omega f} ;\\
&&\W{1}{1}=\int_{M}\w{1}{1}\,e^{-\omega f} = -\frac{(n-3)}{2}\int_{M} CD \,e^{-\omega f} ;\\
&&\W{1}{2}=\int_{M}\w{1}{2}\,e^{-\omega f} = -\frac{(n-3)}{2(n-2)}\int_{M} |C|^{2}\, e^{-\omega f};\\
&&\W{2}{1}=\int_{M}\w{2}{1}\,e^{-\omega f} = -\frac{(n-3)}{2}\int_{M}\set{\frac{1}{n-2}|C|^{2}-\omega\, CD} e^{-\omega f};\\
&&\W{4}{1}=\int_{M}\w{4}{1}\,e^{-\omega f} = \frac{(n-3)}{2(n-2)}\int_{M}\omega |C|^{2} e^{-\omega f} \,.
\end{eqnarray*}

\end{cor}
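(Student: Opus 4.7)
The proof is a direct specialization of the preceding Lemma to the weight $\psi(f) := e^{-\omega f}$. The plan is first to observe that, since $M$ is compact, $\psi(f)$ is automatically smooth and trivially has compact support (all of $M$), so the hypothesis of the preceding Lemma is verified for every $\omega\in\RR$. Then I would substitute $\psi(f) = e^{-\omega f}$ and $\psi'(f) = -\omega\,e^{-\omega f}$ into each of the ten integral formulas produced in the Lemma; since $\psi'(f) = -\omega\,\psi(f)$, the common factor $e^{-\omega f}$ pulls out of every integrand and the coefficients of $|C|^2$, $CD$ and $|D|^2$ reduce to scalars depending only on $\omega$ and $n$.

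For example, the general formula
$$
\W{0}{1}=\frac{1}{2}\int_{M}\set{\psi(f)|C|^2+\sq{(n-4)\psi(f)+(n-2)\psi'(f)}CD-(n-2)\psi'(f)|D|^2}
$$
becomes, after replacing $\psi'(f)$ by $-\omega\psi(f)$ and factoring $e^{-\omega f}$,
$$
\W{0}{1}=\frac{1}{2}\int_{M}\set{|C|^2+\sq{(n-4)-(n-2)\omega}CD+(n-2)\omega|D|^2}e^{-\omega f},
$$
which is exactly the first displayed identity. The remaining five identities are obtained by the identical substitution in the corresponding formulas of the Lemma, with the only sign care being that $\psi'(f)$ contributes a factor $-\omega$.

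Finally, the reason the Corollary lists only six identities is precisely Corollary \ref{cor-deprels}: the four scalars $\W{0}{3}$, $\W{0}{4}$, $\W{2}{2}$ and $\W{3}{1}$ are already expressed there as linear combinations of $\W{1}{1}$, $\W{1}{2}$, $\W{2}{1}$ and $\W{4}{1}$, hence produce no new information and need not be displayed again (one could equivalently derive the corresponding weighted versions by direct substitution and then check compatibility with Corollary \ref{cor-deprels} as a sanity check). There is no genuine obstacle in this proof: it is pure bookkeeping, arranged so that the six surviving identities are in the form required by the algebraic triviality arguments of Sections \ref{sec4} and \ref{sec5}.
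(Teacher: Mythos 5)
Your proposal is correct and follows exactly the paper's own route: the paper obtains Corollary \ref{cor127} precisely by substituting $\psi(f)=e^{-\omega f}$ (legitimate by compactness of $M$) into the preceding weighted-integral lemma and using $\psi'(f)=-\omega\,\psi(f)$, with the four omitted scalars handled by Corollary \ref{cor-deprels}. Nothing further is needed.
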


Note that, in the case $n=4$ and $\omega=0$, the first integral identities appeared in \cite{caotra}.

\

\section{Main results}  \label{sec4}

In virtue of Corollary \ref{cor-deprels}, to compute the weighted integral of a general Weyl scalar $\mathrm{w}_{G}$, i.e. $\int_{M} \mathrm{w}_{G}\,e^{-\omega f}$, it is sufficient to consider a linear combination of the six independent integrals $\W{0}{1},\W{0}{2},\W{1}{1},\W{1}{2},\W{2}{1},\W{4}{1}$. Thus, letting 
$$
\mathbf{A}:=\Big(A^{1}_{0}, A^{2}_{0}, A^{1}_{1}, A^{2}_{1}, A^{1}_{2}, A^{1}_{4}\Big)\in\RR^{6}
$$ 
we can define
\begin{equation}\label{wgai}
\mathcal{W}_{G}(\mathbf{A},\omega):=A^{1}_{0}\W{0}{1}+ A^{2}_{0}\W{0}{2}+A^{1}_{1}\W{1}{1}+A^{2}_{1}\W{1}{2}+A^{1}_{2}\W{2}{1}+A^{1}_{4}\W{4}{1}
\end{equation}
and correspondingly, at the pointwise level,
\begin{equation}\label{wgap}
\mathrm{w}_{G}(\mathbf{A}):=A^{1}_{0}\w{0}{1}+ A^{2}_{0}\w{0}{2}+A^{1}_{1}\w{1}{1}+A^{2}_{1}\w{1}{2}+A^{1}_{2}\w{2}{1}+A^{1}_{4}\w{4}{1}.
\end{equation}
From Corollary \ref{cor127}, a long but straightforward algebraic computation shows the validity of the following 

\begin{cor}\label{corwg}
Let $(M^n,g)$, $n\geq 3$, be a compact gradient Ricci soliton with potential function $f$. Then, for every $\omega\in\RR$
\begin{eqnarray*}
\mathcal{W}_{G} (\mathbf{A},\omega)= \int_{M}\pa{ \alpha|C|^{2} + 2\beta \,CD + \gamma|D|^{2}}e^{-\omega f} ,
\end{eqnarray*}
with
\begin{align*}
\alpha=\alpha(\mathbf{A}, \omega) &:= \frac12\sq{A^{1}_{0}-\frac{(n-3)}{(n-2)}\pa{A^{2}_{1}+A^{1}_{2}-\omega A^{1}_{4}}}		\\
\beta = \beta(\mathbf{A}, \omega) &:= \frac{1}{4}\set{ (n-4) A^{1}_{0}-(n-2)A^{2}_{0}-(n-3)A^{1}_{1}-\omega \sq{(n-2)A^{1}_{0}-(n-3)A^{1}_{2}}}		 \\
\gamma = \gamma(\mathbf{A}, \omega) &:= \frac{(n-2)}{2}\pa{A^{2}_{0}+\omega A^{1}_{0}} .
\end{align*}
\end{cor}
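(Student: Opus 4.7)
The plan is short: Corollary \ref{corwg} is essentially a linear-algebra consequence of Corollary \ref{cor127}. By the definition \eqref{wgai}, $\mathcal{W}_{G}(\mathbf{A},\omega)$ is a fixed linear combination of the six independent integrals $\W{0}{1},\W{0}{2},\W{1}{1},\W{1}{2},\W{2}{1},\W{4}{1}$, and each of these, via Corollary \ref{cor127}, is already expressed as $\int_{M}\{\cdot\}\,e^{-\omega f}$ with integrand a linear combination of $|C|^{2}$, $CD$ and $|D|^{2}$ only (no other tensorial terms survive the integrations by parts carried out in Section \ref{sec3}). Hence the result is automatically of the announced form $\int_{M}(\alpha|C|^{2}+2\beta\,CD+\gamma|D|^{2})e^{-\omega f}$, and the only work is to collect the three coefficients.

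I would do this in three independent passes, one for each of $|D|^{2}$, $|C|^{2}$, $CD$. For $|D|^{2}$ only $\W{0}{1}$ and $\W{0}{2}$ contribute, summing to $\tfrac{(n-2)}{2}\omega A^{1}_{0}+\tfrac{(n-2)}{2}A^{2}_{0}$, which is exactly $\gamma$. For $|C|^{2}$ the contributions come from $\W{0}{1},\W{1}{2},\W{2}{1}$ and $\W{4}{1}$; assembling them gives $\tfrac{1}{2}A^{1}_{0}-\tfrac{(n-3)}{2(n-2)}A^{2}_{1}-\tfrac{(n-3)}{2(n-2)}A^{1}_{2}+\tfrac{(n-3)}{2(n-2)}\omega A^{1}_{4}$, which rearranges to $\alpha$. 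The $CD$ coefficient is the most delicate: it collects contributions from $\W{0}{1},\W{0}{2},\W{1}{1}$ and $\W{2}{1}$, and the $\omega$-dependent terms from $\W{0}{1}$ and $\W{2}{1}$ must be combined as $-\omega[(n-2)A^{1}_{0}-(n-3)A^{1}_{2}]$; after this grouping one recovers $2\beta$ in the form stated.

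No genuine difficulty arises, so the only obstacle is bookkeeping: tracking the signs and the factors $(n-2),(n-3),(n-4)$ across the six contributions, especially those entering the $CD$ coefficient. As sanity checks I would verify two degenerate cases. First, the choice $\mathbf{A}=(0,0,0,0,0,1)$, for which only $\W{4}{1}$ survives: the general formula must then collapse to $(\alpha,\beta,\gamma)=\bigl(\tfrac{(n-3)\omega}{2(n-2)},0,0\bigr)$, in agreement with the closed form of $\W{4}{1}$ in Corollary \ref{cor127}. Second, the unweighted case $\omega=0$, which should reproduce the plain linear combination of the six integrals and, together with the dependence relations of Corollary \ref{cor-deprels}, recover the unweighted identity noted after Corollary \ref{cor127} in the case $n=4$.
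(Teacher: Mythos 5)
Your proposal is correct and matches the paper's (unwritten) argument: the paper itself only states that Corollary \ref{corwg} follows from Corollary \ref{cor127} by ``a long but straightforward algebraic computation,'' which is precisely the coefficient-collection you carry out, and your three tallies for $|C|^{2}$, $CD$ and $|D|^{2}$ (including which of the six integrals contribute to each) all check against the stated $\alpha$, $\beta$, $\gamma$.
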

We now need the following definition. We say that $\pa{\mathbf{A}, \omega}\in\RR^{6}\times\RR$ is $C$-\emph{degenerate} or $D$-\emph{degenerate} if
$$\alpha\pa{\mathbf{A}, \omega}\neq 0\quad\quad\hbox{and}\quad\quad \beta\pa{\mathbf{A}, \omega}=\gamma\pa{\mathbf{A}, \omega}=0
$$
or
$$\gamma\pa{\mathbf{A}, \omega}\neq 0\quad\quad\hbox{and}\quad\quad \alpha\pa{\mathbf{A}, \omega}=\beta\pa{\mathbf{A}, \omega}=0,
$$
respectively. From Corollary \ref{corwg} we immediately deduce the following proposition justifying the terminology.
\begin{proposition}\label{MainPropDeg}
  Let $\varrg$ be a compact shrinking Ricci soliton of dimension $n\geq 4$ with $\mathrm{w}_{G}(\mathbf{A}) =0$  for some $\mathbf{A}\in\RR^{6}$. If there exists $\omega\in\RR$ such that $\pa{\mathbf{A}, \omega}$ is $C$-\emph{degenerate} or $D$-\emph{degenerate}, then $C\equiv 0$ or $D\equiv 0$, respectively.
\end{proposition}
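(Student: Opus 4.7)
The plan is to observe that this proposition is essentially an immediate consequence of Corollary \ref{corwg} combined with the positivity of the exponential weight. The hypothesis $\mathrm{w}_{G}(\mathbf{A}) = 0$ is a pointwise vanishing, which I would first upgrade to the weighted integral vanishing: multiplying by $e^{-\omega f}$ (well-defined and smooth since $M$ is compact) and integrating gives $\mathcal{W}_{G}(\mathbf{A}, \omega) = 0$ for every $\omega \in \RR$, including the particular $\omega$ provided by the degeneracy hypothesis.

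Then I would invoke Corollary \ref{corwg} with this $\omega$ to rewrite the identity as
\begin{equation*}
0 \;=\; \int_{M}\bigl(\alpha(\mathbf{A},\omega)|C|^{2} + 2\beta(\mathbf{A},\omega)\,CD + \gamma(\mathbf{A},\omega)|D|^{2}\bigr)e^{-\omega f}.
\end{equation*}
At this point the two cases split automatically. In the $C$-degenerate case $\beta(\mathbf{A},\omega)=\gamma(\mathbf{A},\omega)=0$, so the identity collapses to $\alpha(\mathbf{A},\omega)\int_{M}|C|^{2}e^{-\omega f}=0$; since $\alpha(\mathbf{A},\omega)\neq 0$ and the weight $e^{-\omega f}$ is strictly positive on the compact manifold $M$, the integrand $|C|^{2}e^{-\omega f}$ is a nonnegative continuous function with zero integral, hence $C\equiv 0$. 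The $D$-degenerate case is strictly analogous: the identity reduces to $\gamma(\mathbf{A},\omega)\int_{M}|D|^{2}e^{-\omega f}=0$ with $\gamma(\mathbf{A},\omega)\neq 0$, and the same positivity argument yields $D\equiv 0$.

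There is no real obstacle here: all the substantive work has already been carried out in the pointwise identities of Lemma \ref{lem-32}, the integration-by-parts step that produced Corollary \ref{cor127}, and the algebraic repackaging leading to Corollary \ref{corwg}. The sole role of the definition of $C$- and $D$-degeneracy is precisely to kill the mixed cross term $CD$ together with one of the two squared norms, so that the residual integral has a definite sign and the conclusion follows from compactness and positivity of $e^{-\omega f}$.
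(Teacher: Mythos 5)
Your argument is correct and is exactly the route the paper takes: the paper simply states that the proposition is ``immediately deduced'' from Corollary \ref{corwg}, and your write-up fills in precisely that deduction (integrate the pointwise vanishing against $e^{-\omega f}$, apply Corollary \ref{corwg}, and use the degeneracy conditions to reduce to a single sign-definite integrand). No gaps.
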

Let
$$
\Delta=\Delta(\mathbf{A}, \omega):=\alpha\gamma - \beta^{2}.
$$
A (long) computation shows that
$$
\Delta = \delta_{2} \omega^{2} + 2\delta_{1} \omega + \delta_{0},
$$
where
\begin{align*}
\delta_{2}=\delta_{2}(\mathbf{A}):=&\frac{1}{16} \sq{-(n-2)^2(A^{1}_{0})^{2} +2(n-3) A^{1}_{0}  ((n-2)A^{1}_{2} +2 A^{1}_{4})-(n-3)^2(A^{1}_{2})^2} \\
\delta_{1}=\delta_{1}(\mathbf{A}):=& \frac{1}{16} \Big\{(A^{1}_{0})^2 (n-2)^2+A^{1}_{0} \big[n (5 A^{1}_{1}+5 A^{1}_{2}\\
&+4 A^{2}_{0}-2 A^{2}_{1})+n^2 (-(A^{1}_{1}+A^{1}_{2}+A^{2}_{0}))-6 A^{1}_{1}-6 A^{1}_{2}\\
&-4 A^{2}_{0}+6 A^{2}_{1}\big]+(n-3) \big(A^{1}_{1} A^{1}_{2}(n-3)+A^{1}_{2} A^{2}_{0} (n-2)+2 A^{1}_{4} A^{2}_{0}\big)\Big\}\\
\delta_{0}=\delta_{0}(\mathbf{A}):=& \frac{1}{16} \Big\{-(A^{1}_{0})^2 (n-4)^2+2 A^{1}_{0} \big[A^{1}_{1} (n-4) (n-3)+A^{2}_{0} (n-2)^2\big]\\
&-(A^{1}_{1})^2 (n-3)^2-2 A^{1}_{1} A^{2}_{0} (n-3) (n-2)\\
&-A^{2}_{0} \big[4 A^{1}_{2} (n-3)+A^{2}_{0} (n-2)^2+4 A^{2}_{1}(n-3)\big]\Big\}.
\end{align*}

We now define the subsets of $\RR^6$
\begin{align*}
\Omega_{+} &:= \set{\mathbf{A}\in\RR^{6}: \delta_{2}(\mathbf{A})>0}, \\
\Omega_{0}\, &:= \set{\mathbf{A}\in\RR^{6}: \delta_{2}(\mathbf{A})=0 \text{ and }\, \delta_{1}(\mathbf{A})\neq 0} \cup \set{\mathbf{A}\in\RR^{6}: \delta_{2}(\mathbf{A})=\delta_{1}(\mathbf{A})=0 \text{ and }\, \delta_{0}(\mathbf{A})>0},\\
\Omega_{-} &:= \set{\mathbf{A}\in\RR^{6}: \delta_{2}(\mathbf{A})<0 \text{ and } \delta_{1}^2(\mathbf{A})-\delta_{2}(\mathbf{A})\delta_{0}(\mathbf{A})>0}
\end{align*}
and
\begin{align*}
  \Omega_{d} =  \set{\mathbf{A}\in\RR^{6}:  \delta_{1}^2(\mathbf{A})-\delta_{2}(\mathbf{A})\delta_{0}(\mathbf{A})>0}\supset\Omega_{-}.
\end{align*}

We can now state our main triviality result. 
\begin{proposition}\label{MainProp}
  Let $\varrg$ be a compact shrinking Ricci soliton of dimension $n\geq 4$. If $\mathrm{w}_{G}(\mathbf{A}) =0$ for some $\mathbf{A}\in \Omega_{+}\cup{\Omega}_{0}\cup{\Omega}_{-}\cup{\Omega}_{d}$, then $C \equiv D \equiv 0$.
\end{proposition}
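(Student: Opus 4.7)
The plan is to convert $\mathrm{w}_G(\mathbf{A})\equiv 0$ into a one-parameter family of weighted integral identities (one for each exponential weight $e^{-\omega f}$), and then choose a suitable $\omega$ to extract pointwise definiteness of the resulting integrand. Since $\mathrm{w}_G(\mathbf{A})=0$ pointwise on $M$, multiplying by $e^{-\omega f}$ and integrating gives $\mathcal{W}_G(\mathbf{A},\omega)=0$ for every $\omega\in\RR$. By Corollary \ref{corwg}, this reads
\[
\int_M \bigl(\alpha(\mathbf{A},\omega)|C|^2 + 2\beta(\mathbf{A},\omega)\,CD + \gamma(\mathbf{A},\omega)|D|^2\bigr)\,e^{-\omega f} = 0 \qquad \text{for every } \omega\in\RR,
\]
where $\alpha,\beta,\gamma$ are linear in $\mathbf{A}$ and polynomial in $\omega$.

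The key observation is that the determinant $\Delta(\mathbf{A},\omega)=\alpha\gamma-\beta^2$ of the associated symmetric form is precisely the quadratic $\delta_2(\mathbf{A})\,\omega^2+2\delta_1(\mathbf{A})\,\omega+\delta_0(\mathbf{A})$ in $\omega$. I would then verify, by a short four-case analysis, that the union $\Omega_+\cup\Omega_0\cup\Omega_-\cup\Omega_d$ is exactly the set of $\mathbf{A}$ for which some $\omega_0\in\RR$ yields $\Delta(\mathbf{A},\omega_0)>0$: for $\mathbf{A}\in\Omega_+$ one has $\delta_2>0$, so $\Delta(\omega_0)>0$ as soon as $|\omega_0|$ is large enough; for $\mathbf{A}\in\Omega_0$ either $\Delta$ is a nonconstant affine function of $\omega$ (hence takes positive values on a half-line) or it is the positive constant $\delta_0$; for $\mathbf{A}\in\Omega_-\cup\Omega_d$ the discriminant $\delta_1^2-\delta_2\delta_0$ is strictly positive, so $\Delta$ has two distinct real roots and is strictly positive either on an open interval (when $\delta_2<0$) or outside those roots (when $\delta_2>0$, or, in the degenerate affine subcase, on a half-line).

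Fix any such $\omega_0$. Because $\delta_0,\delta_1,\delta_2$ are homogeneous of degree two in $\mathbf{A}$, each set $\Omega_*$ is invariant under $\mathbf{A}\mapsto -\mathbf{A}$, whereas $\alpha,\beta,\gamma$ are homogeneous of degree one; after possibly performing this sign change (which leaves $\mathrm{w}_G(\mathbf{A})=0$ intact) we may assume $\alpha(\mathbf{A},\omega_0)>0$, and then $\gamma(\mathbf{A},\omega_0)>0$ follows from $\alpha\gamma>\beta^2$. Applying the Cauchy--Schwarz inequality $|CD|\leq|C|\,|D|$ yields pointwise on $M$
\[
\alpha|C|^2 + 2\beta\, CD + \gamma|D|^2 \;\geq\; \alpha|C|^2 - 2|\beta|\,|C|\,|D| + \gamma|D|^2 \;\geq\; c_0\bigl(|C|^2 + |D|^2\bigr),
\]
for some constant $c_0=c_0(\alpha,\beta,\gamma)>0$, since the reduced $2\times 2$ form has positive trace $\alpha+\gamma$ and positive determinant $\alpha\gamma-\beta^2$. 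Combined with $e^{-\omega_0 f}>0$ on the compact manifold $M$, the vanishing of the weighted integral then forces $C\equiv D\equiv 0$.

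The main (indeed essentially only) obstacle is the bookkeeping in the four-case analysis that shows $\Omega_+\cup\Omega_0\cup\Omega_-\cup\Omega_d$ is exactly the locus where $\Delta(\mathbf{A},\omega)$ can be made strictly positive as $\omega$ varies; everything else, granted Corollary \ref{corwg} and Cauchy--Schwarz, is a routine linear-algebra/real-analysis check.
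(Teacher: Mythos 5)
Your proposal is correct and follows essentially the same route as the paper: pointwise vanishing of $\mathrm{w}_G(\mathbf{A})$ gives $\mathcal{W}_G(\mathbf{A},\omega)=0$ for every $\omega$, the same four-case analysis of the quadratic $\Delta(\mathbf{A},\omega)=\delta_2\omega^2+2\delta_1\omega+\delta_0$ produces an $\omega_0$ with $\Delta(\mathbf{A},\omega_0)>0$, and definiteness of the resulting integrand forces $C\equiv D\equiv 0$. The only (immaterial) difference is that you establish definiteness via Cauchy--Schwarz together with the sign flip $\mathbf{A}\mapsto-\mathbf{A}$ to arrange $\alpha>0$, whereas the paper's Lemma \ref{LemmaDelta} completes the square as $\alpha\left|C+\tfrac{\beta}{\alpha}D\right|^2+\tfrac{\Delta}{\alpha}|D|^2$, which handles either sign of $\alpha$ directly.
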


\begin{rem}
  Note that if $D \equiv 0$ then $C\equiv 0$ (see \cite{Cao and Chen [***]}). Moreover when $M$ is compact, $C\equiv 0$ implies $(M,g)$ Einstein (see \cite{Fernandez lopez*****}).
\end{rem}

To prove Proposition \ref{MainProp} we first need the following
\begin{lemma}\label{LemmaDelta}
  Let $\varrg$ be a compact shrinking Ricci soliton of dimension $n\geq 4$. If $\mathcal{W}_{G}(\overline{\mathbf{A}}, \bar{\omega})=0$ for some  $(\overline{\mathbf{A}}, \bar{\omega})\in \RR^6\times\RR$ such that $\Delta(\overline{\mathbf{A}}, \bar{\omega})> 0$, then $C \equiv D \equiv 0$.
\end{lemma}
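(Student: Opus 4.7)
The plan is to reduce the statement to the elementary fact that a pointwise definite quadratic form with zero integral must vanish identically. By Corollary \ref{corwg}, the hypothesis $\mathcal{W}_{G}(\overline{\mathbf{A}},\bar{\omega})=0$ translates into
\[
\int_{M} \bigl(\alpha|C|^{2}+2\beta\, CD+\gamma|D|^{2}\bigr)\,e^{-\bar{\omega} f}=0,
\]
where $\alpha,\beta,\gamma$ are the coefficients given by Corollary \ref{corwg} evaluated at $(\overline{\mathbf{A}},\bar{\omega})$. The assumption $\Delta=\alpha\gamma-\beta^{2}>0$ forces $\alpha\gamma>\beta^{2}\ge 0$, so $\alpha$ and $\gamma$ are nonzero and share the same sign. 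Since $\mathcal{W}_{G}$, $\alpha$, $\beta$, $\gamma$ are all linear in $\mathbf{A}$ while $\Delta$ is invariant under $\mathbf{A}\mapsto-\mathbf{A}$, I may replace $\overline{\mathbf{A}}$ by its opposite if necessary to arrange $\alpha>0$ (and hence $\gamma>0$) without loss of generality.

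The key pointwise ingredient is a sign-free Cauchy--Schwarz estimate: $|CD|=|C_{ijk}D_{ijk}|\le |C|\,|D|$ yields $2\beta\,CD\ge -2|\beta|\,|C|\,|D|$ regardless of the sign of $\beta$. Consequently the integrand is bounded from below by the quadratic form
\[
Q(x,y):=\alpha x^{2}-2|\beta|xy+\gamma y^{2},\qquad x=|C|,\ y=|D|,
\]
whose discriminant equals $4(\beta^{2}-\alpha\gamma)=-4\Delta<0$. Combined with $\alpha>0$, this makes $Q$ positive definite, so the integrand is pointwise non-negative. The weight $e^{-\bar{\omega}f}$ being strictly positive and the integral vanishing, the integrand must itself be identically zero on $M$; the chain
\[
0=\alpha|C|^{2}+2\beta\,CD+\gamma|D|^{2} \;\ge\; Q(|C|,|D|) \;\ge\; 0
\]
then forces $Q(|C|,|D|)=0$ at every point, and positive definiteness yields $|C|=|D|=0$, i.e.\ $C\equiv D\equiv 0$ on $M$.

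The argument is essentially algebraic once Corollary \ref{corwg} is in hand, and I do not expect any serious obstacle. The one point worth highlighting is the uniform Cauchy--Schwarz bound on the cross term, which is insensitive to the sign of $\beta$: this is precisely what allows a single weighted integral identity, together with the condition $\Delta>0$, to control both $|C|$ and $|D|$ simultaneously and to close the argument in one stroke.
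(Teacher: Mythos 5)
Your argument is correct and follows essentially the same route as the paper: both reduce, via Corollary \ref{corwg}, to the vanishing of a weighted integral of the quadratic form $\alpha|C|^{2}+2\beta\,CD+\gamma|D|^{2}$ and then exploit its definiteness when $\Delta=\alpha\gamma-\beta^{2}>0$. The only cosmetic difference is that the paper completes the square pointwise, writing the integrand as $\bar{\alpha}\,|C+\tfrac{\bar{\beta}}{\bar{\alpha}}D|^{2}+\tfrac{\bar{\Delta}}{\bar{\alpha}}|D|^{2}$, whereas you pass to the scalars $|C|,|D|$ via Cauchy--Schwarz and invoke the negative discriminant; both are valid.
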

\begin{proof}
  We set $\bar{\alpha}=\alpha(\overline{\mathbf{A}}, \bar{\omega})$, $\bar{\beta}=\beta(\overline{\mathbf{A}}, \bar{\omega})$,   $\bar{\gamma}=\gamma(\overline{\mathbf{A}}, \bar{\omega})$ and $\bar{\Delta}=\Delta(\overline{\mathbf{A}}, \bar{\omega})$.  Since $\bar{\Delta}> 0$ we have $\bar{\alpha}\neq 0$, and  from $\mathcal{W}_{G}(\overline{\mathbf{A}}, \bar{\omega})=0$ we deduce
  \begin{align*}
    0 &= \int_{M}\pa{ \bar{\alpha}|C|^{2} + 2\bar{\beta} \,CD + \bar{\gamma}|D|^{2}}e^{-\bar{\omega} f} \\ &=\int_{M} \pa{\bar{\alpha}\abs{C+\frac{\bar{\beta}}{\bar{\alpha}}D}^2+\frac{\bar{\Delta}}{\bar{\alpha}} \abs{D}^2}      e^{-\bar{\omega} f},
  \end{align*}
  which implies $D\equiv 0$ and $C\equiv 0$.
\end{proof}

\begin{proof}[Proof of Proposition \ref{MainProp}]
  Let $\varrg$ be a compact shrinking Ricci soliton of dimension $n\geq 4$ satisfying $\mathrm{w}_{G}(\mathbf{A}) =0$, with $\mathbf{A}\in \Omega_{+}\cup{\Omega}_{0}\cup{\Omega}_{-}$. In particular, $\mathcal{W}_{G}(\mathbf{A}, \omega)=0$ for all $\omega\in\RR$.

  \begin{itemize}
    \item [1.] If $\mathbf{A} \in \Omega_+$, then $\delta_{2}(\mathbf{A})>0$ and thus $\Delta(\mathbf{A}, \omega)>0$   for $\omega$ sufficiently large;
    \item [2.] if $\mathbf{A} \in \Omega_0$, then $\delta_{2}(\mathbf{A})=0$ and we have two possibilities: if $\delta_{1}(\mathbf{A})\neq0$, then  $\Delta(\mathbf{A}, \omega)>0$   for $\abs{\omega}$ sufficiently large  with $\omega \delta_1(\mathbf{A})>0$.  If $\delta_{1}(\mathbf{A})=0$, then $\Delta(\mathbf{A}, \omega)=\delta_0 (\mathbf{A})>0$;
    \item [3.] if $\mathbf{A} \in \Omega_-$, then $\delta_{2}(\mathbf{A})<0$ and $\delta_{1}^2(\mathbf{A})-\delta_{2}(\mathbf{A})\delta_{0}(\mathbf{A})>0$;
    \item [4.] if $\mathbf{A} \in \Omega_d$, then $\delta_{1}^2(\mathbf{A})-\delta_{2}(\mathbf{A})\delta_{0}(\mathbf{A})>0$ and, clearly, there exist $\omega\in\RR$ such that $\Delta(\mathbf{A}, \omega)>0$.
  \end{itemize}
  In any case, there exists $\omega\in\RR$ such that $\Delta({\mathbf{A}, \omega})>0$ and the conclusion follows from Lemma \ref{LemmaDelta}.
\end{proof}

\

\section{Special cases}  \label{sec5}

In this final section we highlight some special cases in which we can apply Propositions \ref{MainPropDeg} and \ref{MainProp}. 

\subsection*{1. Single Weyl scalars}  We consider here five general Weyl scalars $\mathrm{w}_{G}$ given by a single term, in order to extend all the known results (at least in the compact case) discussed in the Introduction. We summarize them in the following table: 

\

\

\begin{scriptsize}

\hspace{-1cm}\begin{tabular}{|c|c|c|c|c|c|c|c|c|}
\hline
 & $\mathbf{A}$ & $\alpha$ & $\beta$ & $\gamma$ & $\delta_2$ & $\delta_1$ & $\delta_0$ & $$ \\ \hline
$W\ast \ricc \ast \ricc$ & $(1,0,0,0,0,0)$ & $\frac{1}{2}$ & $\frac{(n-4)-\omega(n-2)}{4}$ & $\omega\frac{n-2}{2}$ & $-\frac{(n-2)^2}{16}$ & $\frac{(n-2)^2}{16}$ & $-\frac{(n-4)^2}{16}$ & $\mathbf{A}\in\Omega_{-}$ \\ \hline
$\diver(W)\ast\nabla \ricc$ & $(0,0,0,1,0,0)$ & $-\frac{n-3}{2(n-2)}$ & $0$ & $0$ & $0$ & $0$ & $0$ & \textit{C-deg.} \\ \hline
$B \ast \ricc$ & $\pa{\frac{1}{n-2},0,0,0,\frac{1}{n-3},0}$ & $0$ & $\frac{(n-4)}{4(n-2)}$ & $\frac{\omega}{2}$ & $0$ & $0$ & $-\pa{\frac{n-4}{4(n-2)}}^2$ & $\underset{(n=4)}{\textit{D-deg.}}$\\ \hline
$B (\nabla f, \nabla f)$ & $\pa{0,-\frac{1}{n-2},\frac{1}{n-3},0,0,0}$ & $0$ & $0$ & $-\frac{1}{2}$ & $0$ & $0$ & $0$ & \textit{D-deg.} \\ \hline
$\diver^4(W)$ & $(0,0,0,0,0,1)$ & $\omega\frac{n-3}{2(n-2)}$ & $0$ & $0$ & $0$ & $0$ & $0$ & \textit{C-deg.} \\ \hline
\end{tabular}

\end{scriptsize}

\

\

Here $*$ denotes a suitable contraction, according to the definition of the Weyl scalars given in Lemma \ref{lem-32}. By Propositions \ref{MainPropDeg} and \ref{MainProp} we can deduce that every compact shrinking solitons of dimension $n\geq 4$ for which one of the five Weyl scalars in the first column of the previous table vanishes must be Einstein. In the case $B * \ricc$ we get the result only in dimension $n=4$. By Corollary \ref{cor-deprels} we include also the condition  $W_{ijkl,ilk}f_j=0$. Note that, in particular, if $B(\nabla f,\nabla f)=0$ or $\diver^{4}(W)=0$, we recover the results in \cite{caoche2} or \cite{cmmVD}, respectively; on the other hand the first three cases provide new conditions ensuring the classification. This proves Proposition \ref{PR_1} in the Introduction.

\subsection*{2. Modified Bach tensors} We prove Proposition \ref{PR_BachGen}.  Let $(M^n, g)$ be a compact Ricci solitons of dimension $n\geq 4$ with
\[
\pa{c_1W_{ikjl, lk}+c_2W_{tikj, t}f_k + \frac{1}{n-2}W_{ikjl}R_{kl}}R_{ij}=0 \quad \text{ on }\,M,
\]
for some $c_1,c_2\in\RR$ with $c_1\neq \frac{1}{n-3}$ and $c_2>-\frac{1}{n-2}$. Equivalently, one has $\mathrm{w}_{G}(\mathbf{A})\equiv 0$ with 
$$
\mathbf{A}=\pa{\frac{1}{n-2},0,-c_2,0,c_1,0}.
$$
A computation shows that in this case we have

\

\begin{center}
\begin{tabular}{|c|c|c|}
\hline
$\alpha$ & $\beta$ & $\gamma$  \\ \hline
$\frac{1-(n-3)c_1}{2(n-2)}$ & $\frac{1}{4}\set{\frac{n-4}{n-2}+(n-3)c_2-\omega\sq{1-(n-3)c_1}}$ & $\frac{\omega}{2}$  \\ \hline
\end{tabular}
\end{center}

\

\noindent and

\

\begin{center}
\begin{tabular}{|c|c|c|}
\hline
$\delta_2$ & $\delta_1$ & $\delta_0$  \\ \hline
$-\pa{\frac{(n-3)c_1-1}{4}}^2$ & $\frac{-(n-3)^2 c_1c_2+(n-3)(c_2-c_1)+1}{16}$ & $-\pa{\frac{(n-4)+(n-2)(n-3)c_2}{4(n-2)}}^2$  \\ \hline
\end{tabular}
\end{center}

\

\noindent In particular, a straightforward computation yields
$$
\delta_1^2-\delta_0 \delta_2 = \frac{(n-3)\sq{(n-2)c_2+1}\sq{(n-3)c_1-1}^2 }{64 (n-2)^2}>0.
$$
Thus $\mathbf{A}\in \Omega_{-}$ and Proposition \ref{PR_BachGen} follows from Proposition \ref{MainProp}.

\subsection*{3. Mixed Weyl scalars}

We prove first Proposition \ref{PR_mix1}. Let $(M^n, g)$ be a compact Ricci solitons of dimension $n\geq 4$. If 
\begin{align*}
c_1W_{tijk,tkji} &+ c_2W_{tijk,tkj}f_{i} + c_3W_{tijk,tk}f_{i}f_{j} + \frac{1}{n-3}W_{tijk,tk}R_{ij}+c_4W_{tijk,t}R_{ik,j}\\&+c_5W_{tijk,t}R_{ik}f_{j}+c_6 W_{tijk}R_{ik,jt}+c_7W_{tijk}R_{ik}f_{t}f_{j} +\frac{1}{n-2} W_{tijk}R_{tj}R_{ik} =0 \quad \text{ on }\,M,
\end{align*}
for some $c_i\in\RR$, $i=1,\ldots,7$,  with either $c_1>0$ or $c_1=0$ and $c_4+\frac{n-2}{n-3}c_6\neq 0$. From Corollary \ref{cor-deprels}, we get $\mathrm{w}_{G}(\mathbf{A})\equiv 0$ with 
$$
\mathbf{A}=\pa{\frac{1}{n-2}, -c_7, \frac{c_6}{n-2}-c_5,-\frac{n-2}{n-3}c_6-c_4, \frac{1}{n-3}, c_1}.
$$
A computation shows that
$$
\delta_2 = \frac{(n-3)}{4(n-2)}c_1.
$$
If $c_1>0$, then $A\in \Omega_+$ and Proposition \ref{PR_mix1} follows from Proposition \ref{MainProp}. On the other hand, if $c_1=0$ and $c_4+\frac{n-2}{n-3}c_6\neq 0$, then $\delta_2=0$ and 
$$
\delta_1 = \frac{(n-3)}{4(n-2)}\left(c_4+\frac{(n-2)}{(n-3)}c_6\right)\neq 0.
$$ 
Thus $A\in \Omega_0$ and Proposition \ref{PR_mix1} follows again from Proposition \ref{MainProp}.

\

Now we prove Proposition \ref{PR_mix2}.  Let $(M^4, g)$ be a compact Ricci solitons of dimension four with
\begin{align*}
c_1W_{tijk,tkji} + &c_2W_{tijk,tkj}f_{i} + c_3W_{tijk,tk}f_{i}f_{j} + c_4W_{tijk,tk}R_{ij}\\&+c_5W_{tijk,t}R_{ik,j}-c_3W_{tijk,t}R_{ik}f_{j}+c_6W_{tijk}R_{ik,jt}+\frac{1}{2} W_{tijk}R_{tj}R_{ik} =0 \quad \text{ on }\,M,
\end{align*}
for some $c_i\in\RR$, $i=1,\ldots,6$,  with $1+c_2+c_4+c_5+c_6\neq 0$. Using Corollary \ref{cor-deprels} implies that $\mathrm{w}_{G}(\mathbf{A})\equiv 0$ with 
$$
\mathbf{A}=\pa{\frac{1}{2},0,0,-c_{2}-c_{5},-c_4-c_6,c_1}.
$$
A computation shows that 

\

\begin{center}
\begin{tabular}{|c|c|c|}
\hline
$\alpha$ & $\beta$ & $\gamma$  \\ \hline
$\frac{1+c_1\omega+c_2+c_4+c_5+c_6}{4}$ & $-\frac{1+c_5+c_6}{8}\omega$ & $\frac{\omega}{2}$  \\ \hline
\end{tabular}
\end{center}

\

\noindent and

\

\begin{center}
\begin{tabular}{|c|c|c|}
\hline
$\delta_2$ & $\delta_1$ & $\delta_0$  \\ \hline
$-\frac{(1+c_5+c_6)^2}{32}+\frac{c_1}{8}$ & $\frac{1+c_2+c_4+c_5+c_6}{16}$ & $0$  \\ \hline
\end{tabular}
\end{center}

\

\noindent In particular, $\delta_1^2-\delta_0\delta_2 =\delta_1^2> 0$ and $\mathbf{A}\in \Omega_{d}$. Now Proposition \ref{PR_mix2} follows from Proposition \ref{MainProp}.

\begin{rem}\label{remnc} We explicitely note that, if $(M,g)$ is a complete noncompact gradient shrinking Ricci soliton with bounded curvature, then Proposition \ref{MainProp} can be applied. In fact, by Shi estimates $|\nabla^k \mathrm{Riem}|$ is bounded for every $k\in\NN$, implying that $|C|$ and $|D$ are bounded. Moreover, it is well known that the potential function $f$ grows quadratically at infinity while the volume of geodesic balls is at most Euclidean (see \cite{caozho}). Hence, all the integration by parts in the proof of Corollary \ref{cor127} can be justified using standard cutoff functions as soon as $\omega$ can be chosen to be positive. For instance, this happens if $\mathbf{A}\in \Omega_+$. Thus, using the classification of noncompact shrinkers with $D=0$ (see \cite{caoche2}), one has
\begin{proposition}\label{PR_noncpt}
Let $(M^n, g)$ be a noncompact Ricci solitons of dimension $n\geq 4$. If 
\begin{align*}
c_1W_{tijk,tkji} &+ c_2W_{tijk,tkj}f_{i} + c_3W_{tijk,tk}f_{i}f_{j} + \frac{1}{n-3}W_{tijk,tk}R_{ij}+c_4W_{tijk,t}R_{ik,j}\\&+c_5W_{tijk,t}R_{ik}f_{j}+c_6 W_{tijk}R_{ik,jt}+c_7W_{tijk}R_{ik}f_{t}f_{j} +\frac{1}{n-2} W_{tijk}R_{tj}R_{ik} =0 \quad \text{ on }\,M,
\end{align*}
for some $c_i\in\RR$, $i=1,\ldots,7$, with $c_1>0$, then $(M, g)$ is isometric to a finite quotient of $N^{n-1}\times \RR$ where $N^{n-1}$ is Einstein and $\RR$ is the Gaussian shrinking soliton. 
\end{proposition}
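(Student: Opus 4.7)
The plan is to follow line-by-line the proof of Proposition \ref{PR_mix1} and upgrade it to the noncompact setting. The algebraic input is identical: translating the hypothesis by Corollary \ref{cor-deprels} yields $\mathrm{w}_{G}(\mathbf{A})\equiv 0$ with the same vector
\[
\mathbf{A}=\pa{\tfrac{1}{n-2},\,-c_{7},\,\tfrac{c_{6}}{n-2}-c_{5},\,-\tfrac{n-2}{n-3}c_{6}-c_{4},\,\tfrac{1}{n-3},\,c_{1}}
\]
as in Section \ref{sec5}, and the computation $\delta_{2}(\mathbf{A})=\tfrac{n-3}{4(n-2)}c_{1}>0$ shows that $\mathbf{A}\in\Omega_{+}$. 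The analytic task is to extend the six identities of Corollary \ref{cor127} to the complete noncompact case, at least for $\omega>0$, so that we may still invoke the sum-of-squares step in Lemma \ref{LemmaDelta}.

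For this extension I would use the standard exhaustion by cutoffs, as suggested in Remark \ref{remnc}. Shi's derivative estimates applied to a complete shrinker with bounded curvature give uniform pointwise bounds on $|\nabla^{k}\mathrm{Riem}|$ for every $k\in\NN$, hence on $|C|$, $|D|$ and on every Weyl--divergence quantity appearing in Lemma \ref{lem-32}. By \cite{caozho}, the potential satisfies $f\sim\tfrac{1}{4}d(\cdot,o)^{2}$ and $\mathrm{Vol}(B_{r}(o))\le Cr^{n}$, so for $\omega>0$ the weight $e^{-\omega f}$ has Gaussian decay and every quantity in sight is integrable. Choose $\eta_{R}$ supported in $B_{2R}(o)$, equal to $1$ on $B_{R}(o)$, with $|\nabla^{j}\eta_{R}|\le C_{j}/R^{j}$, and redo each integration by parts of Corollary \ref{cor127} against the weight $\eta_{R}e^{-\omega f}$. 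The resulting cutoff boundary terms are supported in the annulus $B_{2R}\setminus B_{R}$ and bounded by
\[
C\sum_{j}R^{\,n-j}\,e^{-c\omega R^{2}}\longrightarrow 0 \qquad (R\to\infty),
\]
a finite polynomial correction from $\nabla f$ (linear growth) not spoiling this since $\omega>0$. Passing to the limit reproduces verbatim all six identities of Corollary \ref{cor127} and hence Corollary \ref{corwg}, for every $\omega>0$.

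With Corollary \ref{corwg} now available in the complete noncompact setting, step~1 in the proof of Proposition \ref{MainProp} goes through: since $\mathbf{A}\in\Omega_{+}$, one chooses $\omega>0$ large enough that $\Delta(\mathbf{A},\omega)>0$, so that the identity of Lemma \ref{LemmaDelta},
\[
0=\mathcal{W}_{G}(\mathbf{A},\omega)=\int_{M}\pa{\bar{\alpha}\abs{C+\tfrac{\bar{\beta}}{\bar{\alpha}}D}^{2}+\tfrac{\bar{\Delta}}{\bar{\alpha}}|D|^{2}}e^{-\omega f},
\]
forces $D\equiv 0$ (and a posteriori $C\equiv 0$). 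Finally, invoking the classification of complete noncompact gradient shrinking solitons with $D\equiv 0$ from \cite{caoche2}, $(M,g)$ is rigid and isometric to a finite quotient of $N^{n-1}\times\RR$ with $N^{n-1}$ Einstein, as claimed. The main obstacle is the cutoff bookkeeping for the fourth-order integration by parts demanded by $\w{4}{1}=W_{tijk,tkji}$: one must verify that each of the four iterated divergences produces only bounded pointwise tensors paired with derivatives of $\eta_{R}$ (or factors of $\omega\nabla f$ from differentiating the weight), so that the Gaussian decay of $e^{-\omega f}$ swallows the polynomial volume growth at every stage --- which is precisely why the sign restriction $\omega>0$, equivalently $c_{1}>0$ (placing $\mathbf{A}\in\Omega_{+}$), is essential and cannot be replaced by the $\Omega_{0}$ alternative available in the compact case.
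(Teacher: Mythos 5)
Your proposal is correct and follows essentially the same route as the paper: the authors prove this proposition in Remark \ref{remnc} by exactly the argument you give --- the same vector $\mathbf{A}$ with $\delta_2(\mathbf{A})=\frac{n-3}{4(n-2)}c_1>0$ placing $\mathbf{A}\in\Omega_+$, justification of the integrations by parts in Corollary \ref{cor127} for $\omega>0$ via Shi's estimates, the quadratic growth of $f$ and the at-most-Euclidean volume growth from \cite{caozho} with standard cutoffs, and then the classification of noncompact shrinkers with $D\equiv 0$ from \cite{caoche2}. Your write-up is in fact somewhat more explicit about the cutoff bookkeeping than the paper's sketch, but the underlying argument is identical.
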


 \end{rem}

\

\

\begin{ackn} The authors are supported by GNAMPA project ``Strutture di tipo {E}instein e {A}nalisi {G}eometrica su variet\`a {R}iemanniane e {L}orentziane''. The first author is supported also by the PRIN project ``Variational methods, with applications to problems in mathematical physics and geometry".
\end{ackn}

\

\

\bibliographystyle{abbrv}

\bibliography{K-Weyl}
\end{document}